\newcommand{\ot}{\otimes}
\newcommand{\otexp}[2]{{#1^{\ot #2}}}
\newcommand{\catVec}{
\@ifnextchar[{\bracketCVec}{\nobracketCVec}
}
\def\bracketCVec[#1]{ \mathsf{Vec}_{#1} }
\def\nobracketCVec{ \mathsf{Vec} }
\newcommand{\downar}{
\@ifnextchar*{\starDownAr}{\nostarDownAr}
}
\def\starDownAr*{ {\raisebox{0.5pt}[0pt][0pt]{\ensuremath{\scriptstyle{\downarrow}}}} }
\def\nostarDownAr{ {\raisebox{1pt}[0pt][0pt]{\ensuremath{\downarrow}}} }
\newcommand{\upar}{
\@ifnextchar*{\starUpAr}{\nostarUpAr}
}
\def\starUpAr*{ {\raisebox{1pt}[0pt][0pt]{\ensuremath{\scriptstyle{\uparrow}}}} }
\def\nostarUpAr{ {\raisebox{1pt}[0pt][0pt]{\ensuremath{\uparrow}}} }
\newcommand{\id}{
\@ifnextchar[{\tensorID}{\notensorID}
}
\def\tensorID[#1]{ \otexp{\mathsf{1}}{#1} }
\def\notensorID{ \mathsf{1}	}
\newcommand{\set}[2]{\left\{ #1 \ | \ #2 \right\} }
\newcommand{\dd}{\partial}	
\newcommand{\codd}{\delta}
\newcommand{\dg}[1]{{\left| #1 \right|}}
\newcommand{\into}{\hookrightarrow}
\newcommand{\N}{\mathbb{N}}
\newcommand{\kspan}[1]{\fld\!\left\langle #1 \right\rangle}
\newcommand{\op}{\oplus}
\newcommand{\opw}{\ \oplus\ }
\renewcommand{\Im}{\mathop{\mathrm{Im}}\nolimits}	
\newcommand{\dgmod}[1]{\textrm{dg #1-mod}}
\newcommand{\Der}{\mathop{\mathrm{Der}}\nolimits}
\newcommand{\colim}{\mathop{\mathrm{colim}}}
\newcommand{\Mor}{\mathop{\mathrm{Mor}}\nolimits}
\newcommand{\Ob}{\mathop{\mathrm{Ob}}\nolimits}
\newcommand{\quism}{
\@ifnextchar[{\quismName}{\quismNoName}
}
\def\quismName[#1]{\xrightarrow[#1]{\sim}}
\def\quismNoName{\xrightarrow{\sim}}
\newcommand{\To}{\xrightarrow}
\newcommand{\oAss}{\mathcal{A}\mathit{ss}}
\newcommand{\oEnd}{{\mathcal{E}\mathit{nd}}}
\newcommand{\smod}
{
	\@ifnextchar*{\smod@nonsigma}{\smod@sigma}
}
\newcommand{\smod@nonsigma}[1]	
{
	\@ifnextchar[{\smod@nonsigma@col}{\smod@nonsigma@uncol}
}
\newcommand{\smod@sigma}
{
	\@ifnextchar[{\smod@sigma@col}{\smod@sigma@uncol}
}
\def\smod@sigma@col[#1]{\mbox{\hspace{-1.5ex}\ensuremath{#1}-\ensuremath{\Sigma}-module}}
\def\smod@nonsigma@col[#1]{\mbox{\hspace{-1.5ex}\ensuremath{#1}-collection}}
\def\smod@sigma@uncol{\mbox{\hspace{-1.5ex}\ensuremath{\Sigma}-module }}
\def\smod@nonsigma@uncol{\mbox{\hspace{-1.5ex}collection}}
\newcommand{\colop}[1]{\mbox{\ensuremath{#1}-operad}}
\newcommand{\ar}[1]{{\mathrm{ar}(#1)}}
\newcommand{\Fr}{
\@ifnextchar[{\Fr@weight}{\Fr@noweight}
}
\def\Fr@weight[#1]#2{\mathbb{F}^{#1}(#2)}
\def\Fr@noweight#1{\mathbb{F}(#1)}
\newcommand{\inp}[1]{I(#1)}
\newcommand{\out}[1]{O(#1)}
\newcommand{\BC}{\Omega\mathrm{B}}	
\newcommand{\oo}{\circ}
\newcommand{\fpr}{\mathop{\mathrm{*}}\nolimits}	
\newcommand{\bigfpr}{\raisebox{-6pt}[0pt][0pt]{\mbox{\Large *}}}
\newcommand{\uv}[1]{``#1''}
\newcommand{\Kdual}[1]{#1^{\textrm{<}}}
\newcommand{\fld}{\ensuremath{k}}
\newcommand{\ucat}{\ensuremath{\mathsf{C}}}	
\newcommand{\MorCNoId}{\Mor\ucat-\{\textrm{identities}\}}	
\newcommand{\oC}{\mathcal{C}}
\newcommand{\oCres}{\oC_\infty}
\newcommand{\oD}{\mathcal{D}}
\newcommand{\oDres}{\oD_\infty}
\newcommand{\oA}{\mathcal{A}}
\newcommand{\oB}{\mathcal{B}}
\newcommand{\oAres}{\oA_\infty}
\newcommand{\chir}[1]{[ \hskip -.5mm [#1] \hskip -.5mm ]}
\newcommand{\pol}[2]{\mathcal{P}(#1,#2)}
\newcommand{\ideal}[1]{\mathcal{I}^{<#1}}
\newcommand{\oP}{\mathcal{P}}
\newcommand{\filt}{\mathfrak{F}}
\newcommand{\indg}[1]{||#1||}
\newcommand{\cm}{
\@ifnextchar[{\bracket}{\nobracket}
}
\def\bracket[#1]{ \Delta^{(#1)} }
\def\nobracket{ \Delta }
\newcommand{\PICLabelling}
{
\left( 
\raisebox{-4ex}{
\begin{tikzpicture}[scale=0.2]
\draw (-2,-2) node[below]{$1$} -- (0,0) node[left]{$a$} -- (0,2);
\draw (0,-2) node[below]{$2$} -- (0,0);
\draw (2,-2) node[below]{$3$} -- (0,0);
\end{tikzpicture}
}
\right) \cdot [312] =
\raisebox{-4ex}{
\begin{tikzpicture}[scale=0.2]
\draw (-2,-2) node[below]{$2$} -- (0,0) node[left]{$a$} -- (0,2);
\draw (0,-2) node[below]{$3$} -- (0,0);
\draw (2,-2) node[below]{$1$} -- (0,0);
\end{tikzpicture}
}}
\newcommand{\PICBlockPerm}
{
\begin{tikzpicture}[scale=0.5]
\draw (1,0.5) node{$1$};
\draw (2,0.5) node{$2$};
\draw (3,0.5) node{$3$};
\draw (4,0.5) node{$4$};
\draw (5,0.5) node{$5$};
\draw (6,0.5) node{$6$};
\draw (0.7,0) rectangle (2.3,1);
\draw (2.7,0) rectangle (3.3,1);
\draw (3.7,0) rectangle (6.3,1);
\draw[->] (1,-0.25)--(1,-0.75);
\draw[->] (2,-0.25)--(2,-0.75);
\draw[->] (3,-0.25)--(3,-0.75);
\draw[->] (4,-0.25)--(4,-0.75);
\draw[->] (5,-0.25)--(5,-0.75);
\draw[->] (6,-0.25)--(6,-0.75);
\draw (1,-1.5) node{$3$};
\draw (2,-1.5) node{$4$};
\draw (3,-1.5) node{$5$};
\draw (4,-1.5) node{$6$};
\draw (5,-1.5) node{$1$};
\draw (6,-1.5) node{$2$};
\draw (0.7,-2) rectangle (1.3,-1);
\draw (1.7,-2) rectangle (4.3,-1);
\draw (4.7,-2) rectangle (6.3,-1);
\end{tikzpicture}
}
\newcommand{\PICTreeOne}
{
\raisebox{-5ex}{
\begin{tikzpicture}[scale=0.5]
\draw (0,0)--(0,-1) node[above left]{$a$}--(-2,-2)--(-2,-3) node[above left]{$b_1$}--(-3,-4) node[below] {$2$};
\draw (-2,-3)--(-1,-4) node[below]{$5$};
\draw (0,-1)--(0,-3) node[above left]{$b_2$}--(0,-4) node[below]{$1$};
\draw (0,-1)--(2,-2)--(2,-3) node[above left]{$b_3$}--(1,-4) node[below]{$4$};
\draw (2,-3)--(2,-4) node[below]{$3$};
\draw (2,-3)--(3,-4) node[below]{$6$};
\filldraw (0,-1) circle (3pt);
\filldraw (-2,-3) circle (3pt);
\filldraw (0,-3) circle (3pt);
\filldraw (2,-3) circle (3pt);
\end{tikzpicture}
}}
\newcommand{\PICTreeTwo}
{
\raisebox{-5ex}{
\begin{tikzpicture}[scale=0.5]
\draw (0,0)--(0,-1) node[above left]{$a$}--(-2,-2)--(-2,-3) node[above left]{$b_1\cdot[21]$}--(-3,-4) node[below] {$5$};
\draw (-2,-3)--(-1,-4) node[below]{$2$};
\draw (0,-1)--(0,-3) node[above left]{$b_2$}--(0,-4) node[below]{$1$};
\draw (0,-1)--(2,-2)--(2,-3) node[above right]{$b_3\cdot[312]$}--(1,-4) node[below]{$6$};
\draw (2,-3)--(2,-4) node[below]{$4$};
\draw (2,-3)--(3,-4) node[below]{$3$};
\filldraw (0,-1) circle (3pt);
\filldraw (-2,-3) circle (3pt);
\filldraw (0,-3) circle (3pt);
\filldraw (2,-3) circle (3pt);
\end{tikzpicture}
}}
\newcommand{\PICTreeThree}
{
\raisebox{-5ex}{
\begin{tikzpicture}[scale=0.5]
\draw (0,0)--(0,-1) node[above right]{$a\cdot[231]$}--(-2,-2)--(-2,-3) node[above left]{$b_2$}--(-2,-4) node[below] {$1$};
\draw (0,-1)--(0,-3)node[above left]{$b_3$}--(-1,-4)node[below]{$4$};
\draw (0,-3)--(0,-4)node[below]{$3$};
\draw (0,-3)--(0.66,-4)node[below]{$6$};
\draw (0,-1)--(2,-2)--(2,-3)node[above left]{$b_1$}--(1.33,-4)node[below]{$2$};
\draw (2,-3)--(3,-4)node[below]{$5$};
\filldraw (0,-1) circle (3pt);
\filldraw (-2,-3) circle (3pt);
\filldraw (0,-3) circle (3pt);
\filldraw (2,-3) circle (3pt);
\end{tikzpicture}
}}
\newcommand{\PICcm}
{
\raisebox{-1ex}{
\begin{tikzpicture}[scale=0.2]
\draw (-1,1) -- (0,0) -- (0,-1);
\draw (1,1) -- (0,0);
\end{tikzpicture}
}}
\newcommand{\PICcmMore}
{
\raisebox{-1.75ex}{
\begin{tikzpicture}[scale=0.2]
\draw (-2.5,2.5) -- (0,0) -- (0,-1);
\draw (-0.5,2.5) -- (-1.5,1.5);
\draw (0.5,2.5) -- (-1,1);
\draw (2.5,2.5) -- (0,0);
\filldraw (0,0.5) circle (1pt);
\filldraw (-0.25,0.75) circle (1pt);
\filldraw (-0.5,1) circle (1pt);
\end{tikzpicture}
}}
\newcommand{\PICfmThree}
{
\raisebox{-1.5ex}{
\begin{tikzpicture}[scale=0.2]
\draw (-2,-2) -- (0,0) -- (0,1);
\draw (2,-2) -- (0,0);
\draw[draw=white,very thick,double=black,double distance=0.4pt] (-1,0) -- (1,-2);
\draw (-3,-2) -- (-1,0) -- (-1,1);
\draw[draw=white,very thick,double=black,double distance=0.4pt] (-2,0) -- (-0,-2);
\draw (-4,-2) -- (-2,0) -- (-2,1);
\end{tikzpicture}
}}
\newcommand{\PICm}
{
\raisebox{-1ex}{
\begin{tikzpicture}[scale=0.2]
\draw (-1,-1) -- (0,0) -- (0,1);
\draw (1,-1) -- (0,0);
\end{tikzpicture}
}}
\newcommand{\PICiks}
{
\raisebox{-1.5ex}{
\begin{tikzpicture}[scale=0.2]
\draw (-1,-1) -- (0,0) -- (0,1) -- (-1,2);
\draw (1,-1) -- (0,0);
\draw (1,2) -- (0,1);
\end{tikzpicture}
}}
\newcommand{\PICcross}
{
\raisebox{-2ex}{
\begin{tikzpicture}[scale=0.2]
\draw (0,0)--(0,1)--(2,3)--(2,4);
\draw[draw=white,very thick,double=black,double distance=0.4pt] (2,1)--(0,3);
\draw (2,0)--(2,1)--(3,2)--(2,3);
\draw (0,1)--(-1,2)--(0,3)--(0,4);
\end{tikzpicture}
}}
\newcommand{\PICiksThree}
{
\raisebox{-2ex}{
\begin{tikzpicture}[scale=0.2]
\draw (-1,-1) -- (0,0) -- (0,1) -- (-2,3);
\draw (1,-1) -- (0,0);
\draw (0,1) -- (2,3);
\draw (-1,2) -- (0,3);
\end{tikzpicture}
}}
\newcommand{\PICcrossThreebold}
{
\raisebox{-3ex}{
\begin{tikzpicture}[scale=0.2]
\draw (-0.5,-3.5)--(0,-4)--(0,-5);
\draw[very thick] (-1,-3)--(2,0)--(2,1);
\draw (0,-4)--(4,0)--(4,1);
\draw[draw=white,very thick,double distance=1.2pt,double=black,] (0,0)--(3.5,-3.5);
\draw (3.5,-3.5)--(4,-4);
\draw[draw=white,very thick,double distance=1.2pt,double=black] (2.5,-0.5)--(4,-2)--(3.5,-2.5);
\draw[very thick] (2,0)--(2.5,-0.5);
\draw[very thick] (4,-2)--(3,-3);
\draw[very thick] (0,1)--(0,0)--(-2,-2)--(-0.5,-3.5);
\draw (4,-5)--(4,-4)--(6,-2)--(4,0);
\end{tikzpicture}
}}
\newcommand{\PICinterm}
{
\raisebox{-2.5ex}{
\begin{tikzpicture}[scale=0.2]
\draw[very thick] (0,0)--(0,1)--(2,3)--(2,3.5);
\draw (2,3.5)--(2,5);
\draw[draw=white,very thick,double=black,double distance=1.2pt] (2,1)--(0,3);
\draw[very thick] (2,0)--(2,1)--(3,2)--(2,3);
\draw[very thick] (0,1)--(-1,2)--(0,3)--(0,3.5);
\draw (0,3.5)--(0,4)--(-1,5);
\draw (0,4)--(1,5);
\end{tikzpicture}
}}
\theoremstyle{definition}
\newtheorem{theorem}{Theorem}[section]
\newtheorem{lemma}[theorem]{Lemma}
\newtheorem{conjecture}[theorem]{Conjecture}
\newtheorem{example}[theorem]{Example}
\newtheorem{remark}[theorem]{Remark}
\newtheorem{definition}[theorem]{Definition}
\theoremstyle{definition}
\newtheorem{assumptions}[theorem]{Assumptions}
\title{On resolutions of diagrams of algebras}
\author{
Martin Doubek\thanks{The author was supported by GA\v{C}R 201/09/H012 and by SVV-2011-263317.},  \\
Charles University, Prague\\
\texttt{martindoubek@seznam.cz}
}
\date{\today}
\begin{document}

\maketitle
\abstract{
We prove a restricted version of a conjecture by M.~Markl made in \cite{HDA} on resolutions of an operad describing diagrams of algebras.
We discuss a particular case related to the Gerstenhaber-Schack diagram cohomology.
}

\section{Introduction}

As explained in \cite{IB}, the operadic cohomology gives a systematic way of constructing cohomology theories for algebras over an operad $\oP$.
The corresponding deformation complex carries an $L_\infty$-structure describing deformations of $\oP$-algebras.
To make this explicit, one has to find a free resolution of $\oP$.

In particular, we can apply this to the coloured operad $\oA_\ucat$ describing a $\ucat$-shaped diagram of $\oA$-algebras.
An important particular case is $\ucat$ consisting of a single morphism.
This is discussed in \cite{HDA},\cite{FMY} and also, indirectly, in the definition of (weak) $A_\infty$ and $L_\infty$ morphisms.
More complicated categories $\ucat$ received very little attention.
In \cite{HDA}, M. Markl discussed examples leading to the notions of homotopy of $\oA$-algebra morphisms and homotopy isomorphism of $\oA$-algebras.
In the end of the paper, a conjecture partially describing resolutions of $\oA_\ucat$ for any $\oA$ and $\ucat$ appears.
In particular, it settles the question of the existence of the minimal resolution of $\oA_\ucat$.
We discuss this conjecture and prove it in the restricted case of $\oA$ being a Koszul operad with generating operations concentrated in a single arity and degree, see the main Theorem~\ref{MAIN}.

The idea is to glue together a \emph{minimal} resolution of $\oA$ and any cofibrant free resolution of $\ucat$.
The generators of the resulting resolution $\oDres$ are described explicitly as well as the principal part of the differential $\dd$.
To state the theorem precisely requires some preliminary work.

First, we discuss operadic resolutions $\oCres$ of categories.
The operads in question are concentrated in arity $1$, hence this is just a \uv{coloured} version of classical homological algebra.
We deal with maps $\chir{-}_n:\oCres\to\otexp{\oCres}{n}$ with certain prescribed properties.
These are needed to construct the principal part of $\dd$.
We show that these maps are induced by certain coproducts on $\oCres$, thus relating them to (coloured) dg bialgebra structures on $\oCres$.

The proof of the main theorem follows the ideas of M. Markl from \cite{HDA}.
It is necessarily more complicated technically and we discuss it in detail in a separate section.
We find it convenient to recall some technical results of coloured operad theory, namely a version of the K\"unneth formula for the composition product $\oo$, which is very useful for homological computations.
Hence we spend some time in the initial part of the paper explaining basics, though we expect the reader is already familiar with coloured operads.

The case $\oCres$ being the bar-cobar resolution is particularly interesting.
Here, $\oCres$ has a topological flavour, it is completely explicit and we even make $\chir{-}$ explicit.
The resulting resolution $\oDres$ conjecturally gives rise to the Gerstenhaber-Schack complex for diagram cohomology \cite{GS}.

Finally, let me thank Martin Markl for many useful discussions.

\pagebreak

\tableofcontents
\bigskip

In \textbf{Section \ref{SectionBasics}}, we briefly recall basic notions of coloured operad theory.
We focus on the interplay between the colours and $\Sigma$ action.
We prove a version of the K\"unneth formula in Section \ref{SectionKunneth}.
It computes the homology of the composition product.

In \textbf{Section \ref{SectionStatement}}, we prepare necessary notions to formulate the main theorem.
In Section \ref{SectionCategory}, we discuss operadic resolutions of categories and give several examples.
In Section \ref{SectionChiral}, we introduce $\chir{-}_n$ maps, certain combinatorial structures on the resolution of the category.
We prove that these maps always exist and recall some examples from the literature.
We show that $\chir{-}_n$'s are induced by $\chir{-}_2$, which is a certain coproduct on the resolution.
In Section \ref{SectionOperad}, we explain how diagrams of algebras are described by coloured operads and show that this construction is functorial and quism-preserving.
Section \ref{SectionTheorem} contains the statement of the main theorem and compares it to the conjecture by M. Markl.

In \textbf{Section \ref{SectionProof}}, the main theorem is proved.
In Section \ref{SectionDiscussion}, we try to explain the structure of the proof and to point out the places where an improvement might be possible.

In \textbf{Section \ref{SectionBC}}, we recall the bar-cobar resolution of the category, then we make $\chir{-}_n$'s explicit by endowing the resolution with a (coloured) bialgebra structure.
Finally, we discuss the conjectural relation to Gerstenhaber-Schack diagram cohomology.

\pagebreak

\section{Basics} \label{SectionBasics}

\subsection{Conventions and reminder}

We will use the following notations and conventions:
\begin{itemize}
\item $\N_0$ is the set of natural numbers including $0$.
\item $\fld$ is a fixed field of characteristics $0$.
\item $\kspan{S}$ is the $k$-linear span of the set $S$.
\item $\ot$ always means tensor product over $\fld$.
\item $\Sigma_n$ is the permutation group on $n$ elements.
\item $V$ denotes a set (of colours\footnote{$V$ actually stands for Vertices, which will become apparent later.}).
\item $\ar{x}$ is arity of the object $x$, whatever $x$ is.
\item Vector spaces over $k$ are called \emph{$k$-modules}, chain complexes of vector spaces over $k$ with differential of degree $-1$ are called \emph{dg-$k$-modules} and morphisms of chain complexes are called just \emph{maps}.
Chain complexes are assumed \textbf{non-negatively graded} unless stated otherwise.
The degree $n$ summand of \dgmod{\fld} $C$ is denoted $C_n$.
We let $C_{\leq n} := \bigoplus_{0\leq i\leq n}C_i$ and similarly for other inequality symbols.
Similar notation is used e.g. for \smod[V]s of Definition \ref{DEFsmod}.
\item $\upar C$ denotes the \emph{suspension} of the graded object $C$, that is $(\upar C)_n=C_{n-1}$.
Similarly, the \emph{desuspension} is defined by $(\downar C)_n=C_{n+1}$.
\item $\dg{x}$ is the degree of an element $x$ of a dg-$k$-module.
\item $H_*(C)$ is homology of the object $C$, whatever $C$ is.
\item \emph{Quism} is a map $f$ of dg-$k$-modules such that the induced map $H_*(f)$ on homology is an isomorphism.
\end{itemize}

We extend the notation introduced in section Basics of \cite{D} for $V$-coloured non-$\Sigma$ operads to $V$-coloured $\Sigma$-operads.

\begin{definition}
A permutation $\sigma : \{ 1,\ldots,n \}\to\{ 1,\ldots,n \}$ will also be denoted by $[\sigma(1)\sigma(2)\cdots\sigma(n)]$.

Let $S$ be any set.
Let $\vec{s}=(s_1,\ldots,s_n)\in S^n$.
If a context is clear, we may use this vector notation without explanation.
$S^n$ carries a right $\Sigma_n$ action $$\vec{s}\cdot\sigma := (s_{\sigma(1)},\ldots,s_{\sigma(n)}).$$
If $f:\otexp{A}{n}\to A$ is a linear map, the right $\Sigma_n$ action on $f$ is defined by $$(f\cdot\sigma)(\vec{a}):=f(\sigma\cdot\vec{a}):=f(\vec{a}\cdot\sigma^{-1})=f(a_{\sigma^{-1}(1)},\ldots,a_{\sigma^{-1}(n)})$$
for $\vec{a}\in A^n$.
This is useful for intuitive understanding of the right $\Sigma_n$ action on elements of an operad.
While drawing pictures, we use the convention that into a leaf labelled $i$, the $i^\textrm{th}$ input element is inserted.
Hence element $a\cdot \sigma$ is drawn with labels $\sigma^{-1}(1),\sigma^{-1}(2),\ldots,\sigma^{-1}(n)$ from left to right, e.g. $$\PICLabelling.$$

For $\vec{v}\in V^n$, let
$$\Sigma_{\vec{v}} := \set{\sigma\in\Sigma_n}{\vec{v}=\vec{v}\cdot{\sigma}} = \set{\sigma\in\Sigma_n}{v_i=v_{\sigma(i)}\textrm{ for each }1\leq i\leq n}$$
be the stabilizer of $\vec{v}$ under the action of $\Sigma_n$.
\end{definition}

\begin{definition} \label{DEFsmod}
A \textbf{dg \smod[V]} $X$ is a set $$\set{X(n)}{n\in\N_0}$$ of dg right $\kspan{\Sigma_n}$-modules such that each of them decomposes $$X(n)=\bigoplus_{\substack{v\in V,\\v_1,\ldots,v_n\in V}} X\binom{v}{v_1,\ldots,v_n}$$ as a dg $\fld$-module and $\sigma\in\Sigma_n$ acts by a dg $\fld$-module morphism $$\cdot\sigma: X\binom{v}{v_1,\ldots,v_n} \to X\binom{v}{v_{\sigma(1)},\ldots,v_{\sigma(n)}}.$$

It follows that $X\binom{v}{\vec{v}}$ is a dg $\kspan{\Sigma_{\vec{v}}}$-module.
In particular, the differential commutes with the $\kspan{\Sigma_{\vec{v}}}$ action.

A \textbf{dg $V$-$\Sigma$-operad} is a dg \smod[V] with the usual operadic compositions $\oo_i$.
The axioms these compositions satisfy are the same as those for non-$\Sigma$ dg \colop{V} (see \cite{D}, Definition $2.1$) and we moreover require $\oo_i$'s to be equivariant in the usual sense (see \cite{MSS}, Definition 1.16 for noncoloured case).

We usually omit the prefix $\Sigma$ and non-$\Sigma$.
If $a,b$ are elements of an operad $\oA$ and $\ar{a}=1$, we usually abbreviate $ab := a\oo b := a\oo_1 b$.
If $\ar{a}=n$, we also abbreviate $a(b_1\ot\cdots\ot b_n):=(\cdots((a\oo_1 b_1)\oo_2 b_2)\cdots)\oo_n b_n$.
If $V$ is a single element set, we omit the prefix \uv{$V$-}, otherwise we strictly keep the prefix.
\end{definition}

Now we discuss the composition product $\oo$ on the category of \smod[V]s.
We need some preliminary notions first.

\begin{definition} \label{DEFBlockPerm}
Let $l_1,\ldots,l_m$ be nonnegative integers.
For $n:=l_1+\cdots+l_m$, there is the inclusion 
$$\Sigma_{l_1}\times\cdots\times\Sigma_{l_m} \into \Sigma_n$$
given by
$$(\lambda_1\times\cdots\times\lambda_m)(l_1+\cdots+l_{i-1}+j) := l_1+\cdots+l_{i-1}+\lambda_i(j),$$
where $1\leq i\leq m$ and $1\leq j\leq l_i$.
If $l_i=0$, we set $\Sigma_{l_i}=\Sigma_0:=\{\id\}$.


Let $\tau\in\Sigma_m$.
Denote 
\begin{align*}
\overline{\tau} := \big[ \ &l_1+\cdots+l_{\tau(1)-1}+1,\ldots,l_1+\cdots+l_{\tau(1)},\\
& l_1+\cdots+l_{\tau(2)-1}+1,\ldots,l_1+\cdots+l_{\tau(2)},\\ 
& \ldots,\\
& l_1+\cdots+l_{\tau(m)-1}+1,\ldots,l_1+\cdots+l_{\tau(m)}\big].
\end{align*}
If $l_i=0$, the block $l_1+\cdots+l_{\tau(i)-1}+1,\ldots,l_1+\cdots+l_{\tau(i)}$ is empty and therefore is omitted in the expression above.
Equivalently, the above formula states $$\overline{\tau}(l_{\tau(1)}+l_{\tau(2)}+\cdots+l_{\tau(i-1)}+j) := l_1+l_2+\cdots+l_{\tau(i)-1}+j$$
for any $1\leq i\leq m$ and $1\leq j\leq l_i$.
$\overline{\tau}$ is called $(l_1,\ldots,l_m)$-\textbf{block permutation} corresponding to $\tau$.
\end{definition}

\begin{example}
\begin{itemize}
\item $[21]\times\id\times[312] = [213645]$
\item $(2,1,3)$-block permutation corresponding to $\tau=[231]$ is $\overline{[231]} = [345612]$:
$$\PICBlockPerm$$
\end{itemize}
\end{example}

\begin{definition}
Fix $v\in V$ and $\vec{v}=(v_1,\ldots,v_n)\in V^n$.
Let $\oA=(\oA,\dd_\oA)$, $\oB=(\oB,\dd_\oB)$ be dg \smod[V]s,
let $l_1,\ldots,l_m$ be nonnegative integers such that $l_1+\cdots+l_m = n$.
For each $1\leq i\leq m$, let $\vec{w}_i=(w_{i,1},\ldots,w_{i,l_i}) \in V^{l_i}$.
Denote $\vec{W} = (\vec{w}_1,\ldots,\vec{w}_m) = (w_{1,1},\ldots,w_{m,l_m}) \in V^n$.
Let 
\begin{align} 
\Sigma(\vec{W},\vec{v}) :&= \set{\sigma\in\Sigma_n}{\vec{W}\cdot\sigma = \vec{v}}\ = \label{SSPreservatorSigma} \\
&=\set{\sigma\in\Sigma_n}{w_{i,j}=v_{\sigma^{-1}(l_1+\cdots+l_{i-1}+j)}\textrm{ for every }1\leq i\leq m,\ 1\leq j\leq l_i}. \nonumber
\end{align}

For fixed $l_1,\ldots,l_m$ and $\vec{w}=(w_1,\ldots,w_m)$
$$\bigoplus_{\vec{W}} \oB\binom{w_1}{\vec{w}_1} \ot \cdots \ot \oB\binom{w_m}{\vec{w}_m} \ot \kspan{\Sigma(\vec{W},\vec{v})}$$
is a dg\footnote{$\kspan{\Sigma(\vec{W},\vec{v})}$ is concentrated in degree $0$.} right $\kspan{\Sigma_{l_1}\times\cdots\times\Sigma_{l_m}}$-module via
$$(b_1\ot\cdots\ot b_m \ot \sigma)\cdot (\lambda_1\times\cdots\times\lambda_m) := (b_1\cdot\lambda_1)\ot\cdots\ot (b_m\cdot\lambda_m) \ot (\lambda_1\times\cdots\times\lambda_m)^{-1}\sigma.$$
Denote the space of coinvariants of this $\kspan{\Sigma_{l_1}\times\cdots\times\Sigma_{l_m}}$-module by the lower index $\Sigma_{l_1}\times\cdots\times\Sigma_{l_m}$.

Now assume only $m$ is fixed and consider
$$\bigoplus_{\substack{l_1,\ldots,l_m\\ \vec{w}}} \oA\binom{v}{\vec{w}} \ot \left( \bigoplus_{\vec{W}} \oB\binom{w_1}{\vec{w}_1} \ot \cdots \ot \oB\binom{w_m}{\vec{w}_m} \ot \kspan{\Sigma(\vec{W},\vec{v})} \right)_{\Sigma_{l_1}\times\cdots\times\Sigma_{l_m}}.$$
This is dg right $\kspan{\Sigma_m}$-module via
$$(a \ot b_1\ot\cdots\ot b_m \ot \sigma)\cdot \tau = (a\cdot\tau) \ot b_{\tau(1)} \ot \cdots \ot b_{\tau(m)} \ot \overline{\tau^{-1}}\sigma,$$
where the bar denotes the corresponding $(l_1,\ldots,l_m)$-block permutation of Definition \ref{DEFBlockPerm}.
It is easy to verify that this action is well defined.

Finally, by taking the $\Sigma_m$ coinvariants and summing over $m$ in the above formula, we get the desired \textbf{composition product} of \smod[V]s:
\begin{gather}
(\oA\oo\oB)\binom{v}{\vec{v}} := \label{EQCompPoductExplicitly} \\
\bigoplus_{m} \left( \bigoplus_{\substack{l_1,\ldots,l_m\\ \vec{w}}} \oA\binom{v}{\vec{w}} \ot \left( \bigoplus_{\vec{W}} \oB\binom{w_1}{\vec{w}_1} \ot \cdots \ot \oB\binom{w_m}{\vec{w}_m} \ot \kspan{\Sigma(\vec{W},\vec{v})} \right)_{\Sigma_{l_1}\times\cdots\times\Sigma_{l_m}} \right)_{\Sigma_m}, \nonumber
\end{gather}
where
\begin{itemize}
\item $m$ runs through nonnegative integers,
\item $l_1,\ldots,l_m$ run through nonnegative integers so that $l_1+\cdots+l_m = n$,
\item $\vec{w}=(w_1,\ldots,w_m)$ runs through $V^m$,
\item $\vec{W}=(\vec{w}_1,\ldots,\vec{w}_m)$ runs through $m$-tuples of $\vec{w}_i$'s, where $\vec{w}_i\in V^{l_i}$,
\item $\Sigma(\vec{W},\vec{v})$ is given by \eqref{SSPreservatorSigma}.
\end{itemize}
To finish the definition of $\oA\oo\oB$, we let $\pi\in\Sigma_n$ act by
$$(a\ot b_1\ot\cdots\ot b_m\ot\sigma)\cdot\pi := a\ot b_1\ot\cdots\ot b_m\ot\sigma\pi.$$

We usually omit the coinvariants from the notation while dealing with elements of $\oA\oo\oB$.
\end{definition}

The purpose of $\Sigma(\vec{W},\vec{v})$ is to label the leaves so that for each $i$, the leaf labelled by $i$ is of colour $v_i$.
The purpose of the coinvariants is the usual one:

\begin{example}
By looking at the pictures, we find that we certainly want the equality
\begin{align*}
a \ot b_1\ot b_2\ot b_3 \ot [251436]^{-1} &= a \ot b_1[21]\ot b_2\ot b_3[312] \ot [521643]^{-1}\\
\PICTreeOne &= \PICTreeTwo.
\end{align*}
But since $[521643] = [251436]([21]\times\id\times[312])$, the above equality is forced by taking the $\Sigma_{l_1}\times\cdots\times\Sigma_{l_m}$ coinvariants.

We also want
\begin{align*}
a \ot b_1\ot b_2\ot b_3 \ot [251436]^{-1} &= a[231] \ot b_2\ot b_3\ot b_1 \ot [143625]^{-1}\\
\PICTreeOne &= \PICTreeThree.
\end{align*}
But $[143625] = [251436]\overline{[231]}$, hence this equality is forced by the $\Sigma_m$ coinvariants.
\end{example}

\begin{definition} \label{DEFMinimal}
Let $X$ be a \smod[V].
The free \colop{V} generated by $X$ carries the \emph{weight grading} $$\Fr{X}=\bigoplus_{i\geq 0}\Fr[i]{X},$$
where $\Fr[i]{X}$ is spanned by free compositions of exactly $i$ generators.
If $X$ is moreover dg \smod[V], the dg structure is inherited to $\Fr{X}$ in the obvious way and we obtain a free dg \colop{V}.
However, $\Fr{X}$ can be equipped with a differential which doesn't come from $X$ and in this case, $(\Fr{X},\dd)$ is called \emph{quasi-free}.

Recall a quasi-free dg \colop{V} $(\Fr{X},\dd)$ is called \textbf{minimal} iff $\Im\dd\subset\Fr[\geq 2]{X}$.
As usual, \emph{free resolution} means a quism $(\Fr{X},\dd)\quism(\oA,\dd)$ with a quasi-free source.
A \emph{minimal resolution} is a resolution with a minimal source.
\end{definition}

\subsection{A K\"unneth formula} \label{SectionKunneth}

Our next task is to prove a version of the K\"unneth formula:

\begin{lemma} \label{YetAnotherKunneth}
Let $(\oA,\dd_\oA)$, $(\oB,\dd_\oB)$ be dg \smod[V]s. 
Then there is a graded \smod[V] isomorphism
$$H_*((\oA\oo\oB),\dd) \cong H_*(\oA,\dd_\oA)\oo H_*(\oB,\dd_\oB).$$
\end{lemma}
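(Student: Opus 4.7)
The plan is to deduce the statement from two standard facts: the classical K\"unneth theorem over a field, together with the exactness of coinvariants for finite group actions in characteristic zero.

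Formula \eqref{EQCompPoductExplicitly} exhibits $\oA\oo\oB$ as an iterated construction on the tensor products
$$\oA\binom{v}{\vec{w}} \ot \oB\binom{w_1}{\vec{w}_1} \ot \cdots \ot \oB\binom{w_m}{\vec{w}_m} \ot \kspan{\Sigma(\vec{W},\vec{v})},$$
in which $\kspan{\Sigma(\vec{W},\vec{v})}$ is concentrated in degree $0$ with trivial differential. To these one applies, in order, a direct sum over $\vec{W}$, coinvariants under $\Sigma_{l_1}\times\cdots\times\Sigma_{l_m}$, a direct sum over $l_1,\ldots,l_m$ and $\vec{w}$, coinvariants under $\Sigma_m$, and finally a direct sum over $m$. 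The differential $\dd$ acts only on the $\oA$- and $\oB$-factors via the Leibniz rule inherited from $\dd_\oA$ and $\dd_\oB$.

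Since $\fld$ is a field, the ordinary K\"unneth theorem provides a natural isomorphism
$$H_*\!\left(\oA\binom{v}{\vec{w}} \ot \oB\binom{w_1}{\vec{w}_1} \ot \cdots \ot \oB\binom{w_m}{\vec{w}_m}\right) \cong H_*(\oA)\binom{v}{\vec{w}} \ot H_*(\oB)\binom{w_1}{\vec{w}_1} \ot \cdots \ot H_*(\oB)\binom{w_m}{\vec{w}_m},$$
and $H_*$ also commutes with direct sums and with the trivial tensor factor $\kspan{\Sigma(\vec{W},\vec{v})}$. Because $\mathrm{char}\,\fld = 0$, Maschke's theorem makes $\fld[\Sigma_{l_1}\times\cdots\times\Sigma_{l_m}]$ and $\fld[\Sigma_m]$ semisimple, so the two coinvariant functors appearing in \eqref{EQCompPoductExplicitly} are exact and hence commute with passage to homology. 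Chaining these isomorphisms in the order prescribed by \eqref{EQCompPoductExplicitly} yields the desired isomorphism of graded $\fld$-modules $H_*((\oA\oo\oB),\dd) \cong H_*(\oA,\dd_\oA)\oo H_*(\oB,\dd_\oB)$.

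What remains is routine bookkeeping: the combined isomorphism must respect the \smod[V] structure, namely the colour decomposition and the right $\Sigma_n$-action $(a\ot b_1\ot\cdots\ot b_m\ot\sigma)\cdot\pi = a\ot b_1\ot\cdots\ot b_m\ot\sigma\pi$. This is automatic from the naturality of the K\"unneth isomorphism under $\fld$-linear maps, since every piece of structure on $\oA\oo\oB$ (the colour grading, the block permutation action producing the $\Sigma_m$ coinvariants, and the outer $\Sigma_n$-action on the permutation factor) is $\fld$-linear and does not interact with $\dd$. I therefore expect no serious obstacle; the argument is essentially an application of standard homological algebra, and the only mildly delicate point is keeping track of the tower of coinvariants, which is harmless by exactness.
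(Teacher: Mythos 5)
Your proposal is correct and follows essentially the same route as the paper: both arguments reduce the claim to the ordinary K\"unneth theorem over a field together with the fact that, by Maschke's theorem in characteristic zero, taking coinvariants under the finite groups $\Sigma_{l_1}\times\cdots\times\Sigma_{l_m}$ and $\Sigma_m$ commutes with homology. The only cosmetic difference is that you invoke exactness of the coinvariants functor abstractly, whereas the paper verifies the same commutation by decomposing the module into irreducibles and tracking where the equivariant differential sends each summand.
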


\begin{proof}
Let $G$ be a finite group, let $(M,\dd)$ be a dg $\kspan{G}$-module.
Obviously, $\dd$ descends to coinvariants, hence $(M_G,\dd)$ is a dg $\kspan{G}$-module too.
We claim
\begin{gather} \label{EQHomologyGroupComm}
H_*(M_G,\dd) \cong (H_*(M,\dd))_G.
\end{gather}
By Maschke's theorem, $$M=\bigoplus_{i\in I} M^i,$$ where $M^i$'s are irreducible $\kspan{G}$-modules.
$\dd$ is $G$-equivariant, hence for each $i$ either $\dd M^i=0$ or $\dd:M^i\To{\cong}M^j$ is an isomorphism for some $j\neq i$.
Denote $$I_P:=\set{i\in I}{\dd M^i=0\textrm{ and there is no }j\textrm{ such that }\dd M^j=M^i}.$$
Also, for each $i$, either $\dd M^i_G=0$ (iff $\dd M^i=0$) or $\dd:M^i_G\To{\cong}M^j_G$ is isomorphism for some $j\neq i$ (iff $\dd:M^i\To{\cong}M^j$).
Then 
\begin{align*}
H_*(M_G,\dd) &= H_*((\bigoplus_{i\in I}M^i)_G,\dd) = H_*(\bigoplus_{i\in I}M^i_G,\dd) \cong \bigoplus_{i\in I_P}M^i_G \cong\\
&\cong (\bigoplus_{i\in I_P}M^i)_G = (H_*(\bigoplus_{i\in I} M^i,\dd))_G \cong (H_*(M,\dd))_G.
\end{align*}
\eqref{EQHomologyGroupComm} is proved.

Let's set some shorthand notation.
In \eqref{EQCompPoductExplicitly}, denote $B(\oB):=\bigoplus_{\vec{W}} \oB\binom{w_1}{\vec{w}_1} \ot \cdots \ot \oB\binom{w_m}{\vec{w}_m} \ot \kspan{\Sigma(\vec{W},\vec{v})}$.
Denote $A(\oA):=\oA\binom{v}{\vec{w}}$ and $\Sigma:=\Sigma_{l_1}\times\cdots\times\Sigma_{l_m}$.
Notice that we are suppressing the dependency on $l_1,\ldots,l_m$ and $\vec{w}$.
Omit $v$ and $\vec{v}$ too.
Hence \eqref{EQCompPoductExplicitly} becomes $$\oA\oo\oB=\bigoplus_m\,\big[ \bigoplus_{\substack{l_1,\ldots,l_m\\ \vec{w}}} A(\oA) \ot B(\oB)_\Sigma \big]_{\Sigma_m}.$$
Let's compute:
\begin{align*}
H_*(\oA\oo\oB,\dd) &= \bigoplus_m H_*\big( \big[ \bigoplus_{\substack{l_1,\ldots,l_m\\ \vec{w}}} A(\oA) \ot B(\oB)_\Sigma \big]_{\Sigma_m}\big) \cong \bigoplus_m \big[ \bigoplus_{\substack{l_1,\ldots,l_m\\ \vec{w}}} H_*(A(\oA) \ot B(\oB)_\Sigma) \big]_{\Sigma_m} \cong\\
&\cong \bigoplus_m \big[ \bigoplus_{\substack{l_1,\ldots,l_m\\ \vec{w}}} H_*(A(\oA)) \ot \big(H_*(B(\oB))\big)_\Sigma \big]_{\Sigma_m} \cong\cdots
\end{align*}
The last isomorphism is provided by the usual K\"unneth formula and \eqref{EQHomologyGroupComm}.
Now trivially $H_*(A(\oA)) = A(H_*(\oA,\dd_\oA))$ and another application of the K\"unneth formula gives $H_*(B(\oB))\cong B(H_*(\oB,\dd_\oB))$ and we finish:
$$\cdots\cong \bigoplus_m \big[ \bigoplus_{\substack{l_1,\ldots,l_m\\ \vec{w}}} A(H_*(\oA,\dd_\oA)) \ot B(H_*(\oB,\dd_\oB))_\Sigma \big]_{\Sigma_m} = H_*(\oA,\dd_\oA) \oo H_*(\oB,\dd_\oB).$$
\end{proof}

\section{Statement of main theorem} \label{SectionStatement}

\subsection{Operadic resolution of category} \label{SectionCategory}

Let $\ucat$ be a small category and denote $$V:=\Ob\ucat$$ the set of its objects.
For a morphism $f\in\Mor\ucat$, let $\inp{f}$ be its source (Input) and $\out{f}$ its target (Output).
Let $\oC$ be the operadic version of $\oC$, that is
\begin{gather} \label{EQOpeVerCat}
\oC:=\kspan{\Mor\ucat}
\end{gather}
is seen as a coloured \colop{V} concentrated in arity $1$, where each $f\in\Mor\ucat$ is an element of $\oC\binom{\out{f}}{\inp{f}}$ and the operadic composition is induced by the categorical composition.
Obviously, $\oC$ can be presented as $$\oC = \frac{\Fr{\kspan{\MorCNoId}}}{(\textrm{relations})},$$ where each relator is generated by those of the form $r_1-r_2$ with $r_1,r_2$ being operadic compositions of elements of $\Mor\ucat$.
Recall that the elements corresponding to the identities become a part of the free operad construction.

Every such \colop{V} $\oC$ has a free resolution of the form $$\oCres := (\Fr{F},\dd) \quism (\oC,0),$$
where the graded \smod[V]\footnote{Of course, the action of $\Sigma_1$ carries no information and can be omitted.} $F=\bigoplus_{i\geq 0}F_i$ satisfies
\begin{assumptions} \label{CatResCond}
\quad
\begin{enumerate} 
 \item $F_0 = \kspan{M}$ for some $M\subset\MorCNoId$,
 \item $F_1 = \kspan{R}$, where for each $r\in R$, $\dd r=r_1-r_2$ for some free operadic compositions $r_1,r_2$ of elements of $M\cup\{\textrm{identities}\}$.
\end{enumerate}
\end{assumptions}
The existence of such a resolution is quite obvious and we will give several examples below.
A general example is given by the bar-cobar resolution, which will be discussed later in Section \ref{SectionBC} in detail.
Before giving the examples, we note that
\begin{gather} \label{EQNote}
\oC \cong \frac{\Fr{F_0}}{(\dd F_1)}.
\end{gather}

\begin{example}
Let $\ucat$ be the category generated by $2$ distinct morphisms between $3$ distinct objects as in the picture:
\begin{diagram}
V_1 & \rTo^{f} & V_2 & \rTo^g & V_3 
\end{diagram}
Then $\Ob\ucat=V=\{V_1,V_2,V_3\}$, $\Mor\ucat=\{\id_{V_1},\id_{V_2},\id_{V_3},f,g,\,h:=gf\}$.
The composition is obvious.
The \colop{V} $\oC$ has colour decomposition $\oC\binom{V_2}{V_1}=\kspan{f}$, $\oC\binom{V_3}{V_2}=\kspan{g}$, $\oC\binom{V_3}{V_1}=\kspan{h}$.
$\oC$ has the following $2$ obvious resolutions:
\begin{enumerate}
\item Directly from the obvious presentation of $\oC$, we get
$$(\Fr{\kspan{f,g,h,H}},\dd) \quism (\oC,0),$$
where $f,g,h$ are copies of the corresponding generators of $\oC$ and $\inp{H}=V_1$, $\out{H}=V_3$.
The degrees are as follows : $\dg{f}=\dg{g}=\dg{h}=0$ and $\dg{H}=1$.
The differential $\dd$ vanishes on $f,g,h$ and $\dd H = gf-h$.
\item A \uv{smaller} resolution of $\oC$ is
$$(\Fr{\kspan{f,g}},0) \quism (\oC,0).$$
It has less generators because the existence of $h$ is already forced by the existence of $f,g$.
This is an example of a minimal resolution of Definition \ref{DEFMinimal}.
\end{enumerate}
\end{example}

\begin{example} \label{SingleMorphism}
The category
\begin{diagram}
V_1 & \rTo^f & V_2
\end{diagram}
has, apart from the obvious one, a free resolution
$$\oCres := (\Fr{\kspan{f,g,H}},\dd) \quism (\oC,0),$$
where $\inp{g}=\inp{H}=V_1$, $\out{g}=\out{H}=V_2$, $\dg{g}=0$, $\dg{H}=1$ and $\dd H = f-g$.
It was observed in \cite{HDA} that every algebra over $\oCres$ corresponds to a pair of dg \fld-modules, a pair of morphisms $f,g$ between these and a homotopy $H$ between $f$ and $g$.
Hence even resolutions of boring categories, such as $\oC$ in this example, may lead to interesting concepts.
\end{example}

\begin{example}
Probably the simplest example of $\ucat$ which can't be resolved in degrees $0$ and $1$ only is given by the commutative cube:
\begin{diagram}[size=1em]
					 &			 & \mathbf{7} &	& \rTo       &			 & \mathbf{8} \\
					 & \ruTo & \uTo  		  &	&            & \ruTo & 						\\					 
\mathbf{5} & \rTo	 & \HonV		  &	& \mathbf{6} &			 & \uTo   		\\
					 &			 & \vLine		  &	&            &			 & 						\\
\uTo			 &			 & \mathbf{3} & \hLine & \VonH      & \rTo     & \mathbf{4}	\\
           & \ruTo &            & & \uTo   	 & \ruTo &						\\
\mathbf{1} &       & \rTo       & & \mathbf{2} & 			 &
\end{diagram}
Objects (i.e. vertices) are denoted $\mathbf{1},\cdots,\mathbf{8}$,
edges (and the corresponding generators of the resolution below) are denoted $(ab)$ with $\mathbf{8}\geq a>b\geq\mathbf{1}$.
The faces are denoted $(abcd)$ with $\mathbf{8}\geq a>b>c>d\geq\mathbf{1}$.
Then
$$(\Fr{\kspan{(\mathbf{21}),\cdots,(\mathbf{4321}),\ldots,H}},\dd) \quism (\oC,0)$$
is generated by all edges and faces and $H$ so that the edges are of degree $0$ and $\inp{(ab)}=b$, $\out{(ab)}=a$; faces are of degree $1$ and $\inp{(abcd)}=d$, $\out{(abcd)}=a$; finally $\dg{H}=2$ and $\inp{H}=\mathbf{1}$, $\out{H}=\mathbf{8}$.
The differential is given by
\begin{align*}
\dd(ab) &= 0,\\
\dd(abcd) &= (ac)(cd)-(ab)(bd),\\
\dd H &= (\mathbf{84})(\mathbf{4321}) + (\mathbf{8743})(\mathbf{31}) - (\mathbf{8642})(\mathbf{21})\ + \\
\phantom{\dd H} &\phantom{=} + (\mathbf{87})(\mathbf{7531}) - (\mathbf{8765})(\mathbf{51}) - (\mathbf{86})(\mathbf{6521}).
\end{align*}
The resolving morphism maps edges to edges and all other generators to $0$.
It is easy to verify that this is a minimal resolution.

We let the reader convince himself that $\oC$ can't indeed be resolved just in degrees $0$ and $1$.
Rigorously, this would follow from the uniqueness of the minimal resolution together with a theorem asserting that any free resolution decomposes into a free product of a minimal resolution and an acyclic dg \colop{V}\footnote{An analogue exists in rational homotopy theory - see \cite{FHT}, Theorem 14.9.}.
These theorems however go beyond the scope of this paper.
\end{example}

\begin{example} \label{ISO}
An explicit resolution of the category generated by
\begin{diagram}
V_1 & \pile{\rTo^f\\ \lTo_g} & V_2
\end{diagram}
with relations $$fg-\id_{V_2},\quad gf-\id_{V_1}$$
was found in \cite{IPL}.
It contains a generator of \emph{each} nonnegative degree.
\end{example}

\subsection{\texorpdfstring{$\chir{-}_n$ maps}{[[-]]\_n maps}} \label{SectionChiral}

Since $\oCres$ is concentrated in arity $1$, we won't distinguish between $\oCres$ and $\oCres(1)$.
Also observe, that \colop{V} concentrated in arity $1$ is just a coloured\footnote{The operations are defined only partially, respecting the colours.} dg associative algebra.

Consider the usual dg structure on $\otexp{\oCres}{n}$.
There is also a right action of $\Sigma_n$ generated by transpositions as follows. 
Let $\tau\in\Sigma_n$ exchange $i$ and $j$. 
Then 
\begin{gather*}
(r_1\ot\cdots\ot r_i\ot\cdots\ot r_j\ot\cdots\ot r_n)\cdot\tau := \\
= (-1)^{\dg{r_i}\dg{r_j}+(\dg{r_i}+\dg{r_j})\sum_{i<k<j}\dg{r_k}} r_1\ot\cdots\ot r_j\ot\cdots\ot r_i\ot\cdots\ot r_n
\end{gather*}
for any $r_1,\ldots,r_n\in\oCres$ such that $\inp{r_i}=\out{s_i}$ for all $1\leq i\leq n$.
Further, there is the factorwise composition on $\otexp{\oCres}{n}$:
\begin{gather} \label{EQFacwiseComp}
(r_1\ot\cdots\ot r_n)\circ(s_1\ot\cdots\ot s_n):=(-1)^{\sum_{n\geq i>j\geq 1} \dg{r_i}\dg{s_j}}(r_1s_1)\ot\cdots\ot(r_ns_n).
\end{gather}
It is easily seen that $\dd$ is a degree $-1$ derivation with respect to $\oo$:
$$\dd(R\circ S)=(\dd R)\circ S+(-1)^{\dg{R}}R\circ \dd S$$
for any $R,S\in\otexp{\oCres}{n}$.
Also, $\oo$ is $\Sigma_n$ equivariant: $$(R\oo S)\cdot\tau = (R\cdot\tau)\oo(S\cdot\tau).$$

The following lemma is a straightforward generalization of Definition $23$ of \cite{HDA}:

\begin{lemma} \label{chiral}
For every integer $n\geq 1$, there is a linear map
$$\chir{-}_n:\oCres\to\otexp{\oCres}{n}$$
satisfying for every $r,r'\in\oCres$
\begin{enumerate}
\item[(C1)] $\chir{r}_n$ is $\Sigma_n$-stable,
\item[(C2)] $\chir{r}_n\in\otexp{\oCres\binom{\out{r}}{\inp{r}}}{n}$,
\item[(C3)]	$\deg{\chir{r}_n} = \deg{r}$,
\item[(C4)] $\chir{f}_n=\otexp f n$ for every morphism $f\in M\subset F_0$ (recall \ref{CatResCond}),
\item[(C5)] $\chir{r\oo r'}_n=\chir{r}_n\oo\chir{r'}_n$,
\item[(C6)] $\dd\chir{r}_n=\chir{\dd r}_n$.
\end{enumerate}
\end{lemma}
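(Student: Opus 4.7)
The plan is to define $\chir{-}_n$ on the generators $F$ of $\oCres=\Fr{F}$ by induction on degree, and then to extend it to all of $\oCres$ by demanding (C5). Since the underlying graded operad of $\oCres$ is free on $F$, such a multiplicative extension exists and is uniquely determined by its values on $F$. That properties (C1)--(C3) and (C6) then propagate from generators to arbitrary elements is routine: the factorwise composition $\circ$ on $\otexp{\oCres}{n}$ is $\Sigma_n$-equivariant, additive in colours and degrees, and admits $\dd$ as a derivation (all noted in the paragraphs preceding the lemma), so the Leibniz rule in $\oCres$ transports each property across products. It therefore suffices to construct $\chir{-}_n$ on the generators $F$ with (C1)--(C4), (C6) satisfied there.

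For the base case I set $\chir{f}_n:=\otexp{f}{n}$ for $f\in M=F_0$ as forced by (C4); this satisfies (C1)--(C3) trivially and (C6) because $\dd f=0$. For the inductive step, suppose $\chir{-}_n$ has already been defined with all properties on generators of degree $\leq d-1$, and let $r\in F_d$. By a degree count $\dd r\in\Fr{F_{\leq d-1}}$, so $\chir{\dd r}_n$ is already specified; it lies in $\otexp{\oCres\binom{\out{r}}{\inp{r}}}{n}$, has degree $d-1$, is $\Sigma_n$-stable, and is a $\dd$-cycle because $\dd\chir{\dd r}_n=\chir{\dd^2 r}_n=0$ by (C6) at lower degree. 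We need a $\Sigma_n$-stable lift of it through $\dd$, of the same colour and of degree $d$; that lift is then declared to be $\chir{r}_n$.

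The main obstacle is precisely this lifting. I would work inside the subcomplex $\bigl(\otexp{\oCres\binom{\out{r}}{\inp{r}}}{n}\bigr)^{\Sigma_n}$ of $\Sigma_n$-invariants in the correct colour. A K\"unneth computation like the one in Lemma \ref{YetAnotherKunneth}, combined with Maschke's theorem in characteristic $0$ (as in \eqref{EQHomologyGroupComm}), identifies the homology of this subcomplex with $\bigl(\otexp{\oC\binom{\out{r}}{\inp{r}}}{n}\bigr)^{\Sigma_n}$, concentrated in degree $0$. Hence for $d\geq 2$ the cycle $\chir{\dd r}_n$ lies in strictly positive degree and is automatically a boundary. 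For $d=1$ the assumption on $F_1$ yields $\dd r=r_1-r_2$ with $r_1,r_2$ compositions of elements of $M\cup\{\textrm{identities}\}$, and (C5) applied at degree $0$ forces $\chir{\dd r}_n=\otexp{r_1}{n}-\otexp{r_2}{n}$; by \eqref{EQNote} we have $r_1=r_2$ in $\oC$, so this cycle maps to $0$ in $H_0$ and is again a boundary. In either case I would pick any preimage $x$ and replace it by $\tfrac{1}{n!}\sum_{\sigma\in\Sigma_n}x\cdot\sigma$ to enforce $\Sigma_n$-stability; this averaging, like the preceding K\"unneth step, is where $\mathrm{char}(\fld)=0$ is essential. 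Declaring the result to be $\chir{r}_n$ closes the induction.
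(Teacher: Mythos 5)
Your argument is correct and follows the paper's proof essentially verbatim: induction on the degree of generators, lifting the cycle $\chir{\dd r}_n$ through $\dd$ using the K\"unneth-derived acyclicity of $\otexp{\oCres}{n}$ in positive degrees, averaging over $\Sigma_n$ in characteristic $0$, and extending multiplicatively to all of $\oCres$ by freeness. The only (harmless) deviation is in degree $1$, where the paper writes down the explicit telescoping lift $\sum_{i=0}^{n-1}\otexp{r_1}{i}\ot f\ot\otexp{r_2}{n-i-1}$ (reused later as an explicit formula), whereas you deduce existence abstractly from $r_1=r_2$ in $H_0(\oCres)\cong\oC$ --- both hinging on Assumptions \ref{CatResCond} in the same way.
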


\begin{proof}
Fix $n$.
We proceed by induction on degree $d$.
(C4) defines $\chir{-}_n$ for $M$, we extend linearly to $F_0$ and then extend by (C5) to all of $\Fr{F_0}$.
Obviously, (C1)--(C6) hold for $r,r'\in\Fr{F_0}$.
Assume we have already defined $\chir{-}_n$ on $\Fr{F_{<d}}$ so that (C2)--(C6) hold.
\begin{enumerate}
\item Let $d=1$.
By the assumptions \ref{CatResCond}, $F_1=\kspan{R}$ and $f\in R$.
We have $\dd f=r_1-r_2$ as in \ref{CatResCond}, hence $\chir{\dd f}_n = \otexp{r_1}{n} - \otexp{r_2}{n}$.
Define 
\begin{gather} \label{AHOJAHOJ}
\chir{f}_n^{\mathrm{NS}} := \sum_{i=0}^{n-1} \otexp{r_1}{i}\ot f\ot\otexp{r_2}{n-i-1}.
\end{gather}
An easy computation shows $\chir{f}_n^{\mathrm{NS}}$ is a degree $1$ element of $\otexp{\oCres}{n}\binom{\out{f}}{\inp{f}}$ satisfying $\dd\chir{f}_n^{\mathrm{NS}} = \chir{\dd f}_n$.
\item Let $d\geq 2$.
Since $\dg{\dd f}<d$, $\chir{\dd f}_n$ is already constructed and we are solving the equation $$\dd\chir{f}_n=\chir{\dd f}_n$$ for an unknown $\chir{f}_n$ in the standard way.
By the induction assumption, $\dd\chir{\dd f}_n = \chir{\dd^2 f}_n = 0$.
By the usual K\"unneth formula, $\otexp{\oCres}{n}$ is acyclic in positive degrees.
Since $\dg{\chir{\dd f}_n}=d-1>0$, we obtain a degree $d$ element $\chir{f}_n^{\mathrm{NS}} \in \otexp{\oCres\binom{\out{f}}{\inp{f}}}{n}$ such that $\dd\chir{f}_n^{\mathrm{NS}} = \chir{\dd f}_n$.
\end{enumerate}
Making $\chir{f}_n^{\mathrm{NS}}$ to satisfy (C1) in characteristics $0$ is easy:
\begin{gather} \label{EQSigmaBalance}
\chir{f}_n := \frac{1}{n!}\sum_{\sigma\in\Sigma_n} \chir{f}_n^{\mathrm{NS}}\cdot\sigma.
\end{gather}
We now have $\chir{f}_n$ satisfying (C1)--(C4) and (C6) for every $f\in F_d$.
Extend this to $\Fr{F_{\leq d}}$ by (C5).
By $\Sigma_n$ equivariance of $\oo$, (C1) holds on $\Fr{F_{\leq d}}$.
Verifying (C2) and (C3) is trivial, hence it remains to check (C6).
Let $f_1,\ldots,f_m\in F_{\leq d}$:
\begin{align*}
\dd\chir{f_1\cdots f_m}_n &= \dd\left( \chir{f_1}_n\oo\cdots\oo\chir{f_m}_n \right) = \sum_{i=1}^m (-1)^{\epsilon_i} \chir{f_1}_n\oo\cdots\oo\dd\chir{f_i}_n\oo\cdots\oo\chir{f_m}_n = \\
&= \sum_{i=1}^m (-1)^{\epsilon_i} \chir{f_1}_n\oo\cdots\oo\chir{\dd f_i}_n\oo\cdots\oo\chir{f_m}_n = \\
&= \chir{\sum_{i=1}^m(-1)^{\epsilon_i} f_1\cdots\dd f_i\cdots f_m}_n = \chir{\dd(f_1\cdots f_m)}_n,
\end{align*}
where $\epsilon_i := {\dg{f_1}+\cdots +\dg{f_{i-1}}}$.
Hence (C6) is valid for all elements of $\Fr{F_{\leq d}}$ and the induction is finished.
\end{proof}

\begin{example}
If $\oCres$ is concentrated in degrees $\leq 1$, then we have explicit formulas \eqref{AHOJAHOJ} and \eqref{EQSigmaBalance} for $\chir{-}$ given in the proof.
\end{example}

\begin{example}
For the resolution of Example \ref{ISO}, the construction of $\chir{-}_n$'s using Lemma \ref{chiral} is not explicit.
In this case, $\chir{-}$ was found explicitly in \cite{HDA}, Remark 25.
\end{example}

The following lemma shows that $\chir{-}_2$ induces $\chir{-}_n$ for all $n\geq 3$.
$\chir{-}_2$ can be thought of as a coproduct on $\oCres$.
If $\chir{-}_2$ is moreover coassociative, then (C2),(C5) and (C6) means that $(\oCres,\oo,\chir{-}_2)$ is a coloured dg \emph{bialgebra}.

\begin{lemma} \label{EasyLemma}
Let $\chir{-}^\mathrm{NS}_2:\oCres\to\oCres\ot\oCres$ be a linear map satisfying the conditions (C2)--(C6) of Lemma \ref{chiral}.
Set 
\begin{gather} \label{EQDefChir}
\chir{-}^\mathrm{NS}_n := (\chir{-}^\mathrm{NS}_2\ot\id[n-2]) (\chir{-}^\mathrm{NS}_2\ot\id[n-3]) \cdots (\chir{-}^\mathrm{NS}_2\ot\id) \chir{-}^\mathrm{NS}_2.
\end{gather}
Then for each $n\geq 3$, $\chir{-}^\mathrm{NS}_n$ satisfies (C2)--(C6) and $\chir{-}_n$ defined by \eqref{EQSigmaBalance} satisfies (C1)--(C6).
If $\chir{-}^\mathrm{NS}_2$ is coassociative, i.e. $(\chir{-}^\mathrm{NS}_2\ot\id)\chir{-}^\mathrm{NS}_2 = (\id\ot\chir{-}^\mathrm{NS}_2)\chir{-}^\mathrm{NS}_2$, then
$$(\id[i]\ot\chir{-}^\mathrm{NS}_a\ot\id[b-i-1]) \chir{-}^\mathrm{NS}_b = \chir{-}^\mathrm{NS}_{a+b-1}$$
for every $a,b\geq 2$, $0\leq i\leq b-1$.
\end{lemma}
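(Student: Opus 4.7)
The plan is to handle the three assertions sequentially, making essential use of the recursive form
\[
\chir{-}^\mathrm{NS}_n = (\chir{-}^\mathrm{NS}_2 \ot \id[n-2]) \chir{-}^\mathrm{NS}_{n-1}
\]
which is immediate from \eqref{EQDefChir}. The $n=2$ case of (C2)--(C6) is assumed, so everything reduces to showing that these properties are preserved by one step of the recursion.

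First, I would verify (C2)--(C6) for $\chir{-}^\mathrm{NS}_n$ by induction on $n$. Properties (C2), (C3) are immediate: the colour profile and total degree are additive under $\ot$ and preserved by $\chir{-}^\mathrm{NS}_2$, hence propagated by $\chir{-}^\mathrm{NS}_2 \ot \id[n-2]$. Property (C4) follows because $\chir{f}^\mathrm{NS}_2 = f \ot f$ for $f \in M$, so iterating gives $\chir{f}^\mathrm{NS}_n = f^{\ot n}$. For (C5), the key observation is that if $\phi:\oCres \to \otexp{\oCres}{k}$ satisfies $\phi(rr') = \phi(r) \oo \phi(r')$ for the factorwise composition, then so does $\phi \ot \id[m]$ on $\oCres \ot \otexp{\oCres}{m}$; this is a direct bookkeeping with \eqref{EQFacwiseComp}, whose signs are controlled by the degree equality (C3). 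Hence $\chir{-}^\mathrm{NS}_2 \ot \id[n-2]$ is multiplicative for factorwise composition, and the inductive step closes. Property (C6) is analogous: $\dd$ is a derivation of $\oo$, so $\chir{-}^\mathrm{NS}_2 \ot \id[n-2]$ is a chain map for the total differential, and $\dd \chir{r}^\mathrm{NS}_n = \chir{\dd r}^\mathrm{NS}_n$ propagates inductively.

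For the second claim, define $\chir{-}_n$ by averaging $\chir{-}^\mathrm{NS}_n$ over $\Sigma_n$ as in \eqref{EQSigmaBalance}; then (C1) is built in. Properties (C2) and (C3) are preserved because the $\Sigma_n$ action preserves colours and degrees. Property (C4) holds since the generators $f \in M \subset F_0$ have degree zero, so $f^{\ot n} \cdot \sigma = f^{\ot n}$ for every $\sigma \in \Sigma_n$ and the average is unchanged. Properties (C5) and (C6) are preserved because $\oo$ is $\Sigma_n$-equivariant and $\dd$ commutes with the $\Sigma_n$ action, as recorded just before Lemma \ref{chiral}.

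For the final claim, assume $\chir{-}^\mathrm{NS}_2$ is coassociative. I would prove
\[
(\id[i] \ot \chir{-}^\mathrm{NS}_a \ot \id[b-i-1]) \chir{-}^\mathrm{NS}_b = \chir{-}^\mathrm{NS}_{a+b-1}
\]
by induction on $a$. The base case $a=2$ asserts $(\id[i] \ot \chir{-}^\mathrm{NS}_2 \ot \id[b-i-1]) \chir{-}^\mathrm{NS}_b = \chir{-}^\mathrm{NS}_{b+1}$: the definition of $\chir{-}^\mathrm{NS}_{b+1}$ expands the leftmost factor, while the left-hand side expands the $(i+1)$-st, so one applies coassociativity $i$ times to commute the new expansion past the earlier ones. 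The inductive step in $a$ factors $\chir{-}^\mathrm{NS}_a = (\chir{-}^\mathrm{NS}_2 \ot \id[a-2]) \chir{-}^\mathrm{NS}_{a-1}$ and uses the base case together with the induction hypothesis. The main obstacle, and the only genuinely delicate step, is this coassociativity bookkeeping: because \eqref{EQDefChir} fixes a specific leftmost parenthesization, the identity amounts to showing that iterated applications of $\chir{-}^\mathrm{NS}_2$ are parenthesization-independent, a standard but notationally heavy consequence of coassociativity. The remaining verifications are driven entirely by the recursion and the Koszul sign rule.
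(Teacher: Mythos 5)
Your treatment of $\chir{-}^\mathrm{NS}_n$ follows essentially the same route as the paper: induction on $n$ via the recursion $\chir{-}^\mathrm{NS}_n=(\chir{-}^\mathrm{NS}_2\ot\id[n-2])\chir{-}^\mathrm{NS}_{n-1}$, with (C5) propagated by the observation that $\phi\ot\id[m]$ is multiplicative for the factorwise composition whenever $\phi$ is (the paper encodes exactly this in its flow-diagram argument), and (C6) propagated by the fact that $\chir{-}^\mathrm{NS}_2\ot\id[n-2]$ is a degree-$0$ chain map. The coassociativity statement is left to the reader in the paper, and your induction on $a$ is a sound way to carry it out.

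There is, however, a genuine gap in your passage from $\chir{-}^\mathrm{NS}_n$ to $\chir{-}_n$. You average $\chir{r}^\mathrm{NS}_n$ over $\Sigma_n$ for \emph{every} $r$ and assert that (C5) survives because $\oo$ is $\Sigma_n$-equivariant. Equivariance gives $(R\oo S)\cdot\sigma=(R\cdot\sigma)\oo(S\cdot\sigma)$, so averaging $\chir{rr'}^\mathrm{NS}_n=\chir{r}^\mathrm{NS}_n\oo\chir{r'}^\mathrm{NS}_n$ produces the \emph{diagonal} sum $\tfrac{1}{n!}\sum_\sigma(\chir{r}^\mathrm{NS}_n\cdot\sigma)\oo(\chir{r'}^\mathrm{NS}_n\cdot\sigma)$, whereas $\chir{r}_n\oo\chir{r'}_n$ is the double sum $\tfrac{1}{(n!)^2}\sum_{\sigma,\tau}(\chir{r}^\mathrm{NS}_n\cdot\sigma)\oo(\chir{r'}^\mathrm{NS}_n\cdot\tau)$; the cross terms do not coincide in general, so the globally averaged map need not satisfy (C5). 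The construction actually intended by \eqref{EQSigmaBalance} is the one used in the proof of Lemma \ref{chiral}: apply the averaging only to the generators $f\in F$ and then extend to all of $\Fr{F}$ by (C5). With that definition, (C5) holds by construction, (C1) holds on composites because a factorwise composition of $\Sigma_n$-stable elements is again $\Sigma_n$-stable (this is where equivariance of $\oo$ is genuinely used), and (C6) follows from the derivation computation already written out at the end of the proof of Lemma \ref{chiral}. Your justification of (C6) under averaging (that $\dd$ commutes with the $\Sigma_n$-action) is correct, but it is attached to the wrong definition of $\chir{-}_n$.
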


\begin{proof}
Conditions (C2)--(C4) for $\chir{-}^\mathrm{NS}_n$ are easily seen to be satisfied.

We sketch a proof of (C5) by the standard flow diagrams.
Let $\chir{-}^\mathrm{NS}_2$ be represented by $\PICcm$, then $\chir{-}^\mathrm{NS}_n$ is represented by $\overbrace{\PICcmMore}^n$.
Let $\oo:\oCres\ot\oCres\to\oCres$ of \eqref{EQFacwiseComp} be represented by $\PICm$, then $\oo:\otexp{\oCres}{n}\ot\otexp{\oCres}{n}\to\otexp{\oCres}{n}$ is represented, e.g. for $n=3$, by $\PICfmThree$.
Observe that the signs are handled by the Koszul sign convention.
The property (C5) for $\chir{-}^\mathrm{NS}_2$ states 
\begin{gather} \label{EQBialg}
\PICiks = \PICcross.
\end{gather}
For $n=3$, we have to prove $\chir{a\oo b}^\mathrm{NS}_3 = \chir{a}^\mathrm{NS}_3 \oo \chir{b}^\mathrm{NS}_3$, i.e. $$\PICiksThree = \PICcrossThreebold.$$
Applying \eqref{EQBialg} to the bold subgraph, we obtain $$\PICinterm$$
and another application of \eqref{EQBialg} on the bold subgraph gives the left hand side of the desired equality.
The general case is analogous.

We prove (C6) by induction on $n$.
$n=2$ is the hypothesis.
Let (C6) be true for $n-1$ and let's compute:
\begin{align*}
\dd\chir{-}^\mathrm{NS}_n &= \dd(\chir{-}^\mathrm{NS}_2\ot\id[n-2]) \cdots (\chir{-}^\mathrm{NS}_2\ot\id)\chir{-}^\mathrm{NS}_2 = \\
&= (\dd\chir{-}^\mathrm{NS}_2\ot\id[n-2])\chir{-}^\mathrm{NS}_{n-1} + \sum_{i=0}^{n-3} (\chir{-}^\mathrm{NS}_2\ot\id[i]\ot\dd\ot\id[n-3-i])\chir{-}^\mathrm{NS}_{n-1} = \\
&= (\chir{-}^\mathrm{NS}_2\ot\id[n-2])(\dd\ot\id[n-2])\chir{-}^\mathrm{NS}_{n-1}\ + \\
&\phantom{=} + \sum_{i=0}^{n-3} (\chir{-}^\mathrm{NS}_2\ot\id[n-2]) (\id[i+1]\ot\dd\ot\id[n-3-i])\chir{-}^\mathrm{NS}_{n-1} = \\
&= (\chir{-}^\mathrm{NS}_2\ot\id[n-2])\dd\chir{-}^\mathrm{NS}_{n-1} = (\chir{-}^\mathrm{NS}_2\ot\id[n-2])\chir{-}^\mathrm{NS}_{n-1}\dd = \chir{-}^\mathrm{NS}_{n}\dd.
\end{align*}

The proof of the coassociativity statement is easy and we leave it to the reader.
\end{proof}

Later, in Theorem \ref{BarCobarChiral}, we will construct $\chir{-}$ on the bar-cobar resolution $\BC{\oC}$ of $\oC$ using this lemma out of a coassociative coproduct on $\BC{\oC}$.

\begin{example}
Our assumptions \ref{CatResCond} are important.
Consider the category generated by a single morphism between two distinct objects as in Example \ref{SingleMorphism}.
Then $\oC$ has yet another resolution:
Take the same generators as in \ref{SingleMorphism},
$$\oCres := (\Fr{\kspan{f,g,H}},\dd) \quism (\oC,0),$$
but let $$\dd H = f + g.$$
Then an elementary linear algebra shows that $\otexp{f}{2}+\otexp{g}{2}$ is a cycle but not a boundary in $\otexp{\oCres}{2}$.
Hence our proof of Lemma \ref{chiral} would fail.
\end{example}

\subsection{Operad describing diagrams} \label{SectionOperad}

Let a small category $\ucat$ (together with its operadic version \eqref{EQOpeVerCat}) and a dg operad $\oA$ be given.
A ($\ucat$-shaped) \textbf{diagram} of ($\oA$-algebras) is a functor $$D:\ucat\to\oA\mathsf{-algebras}.$$
Now we describe a dg \colop{V} $\oD$ such that $\oD$-algebras are precisely $\ucat$-shaped diagrams.
We denote by $*$ the free product of dg \colop{V}, i.e. the coproduct in the category of dg \colop{V}.

\begin{definition} \label{EQDefOfDiagramFunctor}
For any (noncoloured) dg operad $(\oA,\dd_\oA)$, define
\begin{gather} \label{oDdefinition}
(\oA,\dd_{\oA})_{\ucat} := \left( \frac{( \bigfpr_{v\in V} \oA_v)\fpr\oC}{(fa_{\inp{f}}-a_{\out{f}}\otexp{f}{\ar{a}}\ |\ a\in\oA,\ f\in\Mor\ucat)},  \dd\right),
\end{gather}
where $\oA_v$ is a copy of $\oA$ concentrated in colour $v$ and symbols for its elements are decorated with lower index $v$.
Let the differential $\dd$ be defined by formulas
\begin{align*}
&\dd a_v = (\dd_{\oA} a)_v,\\
&\dd f=0
\end{align*}
for any  $a\in\oA$, $v\in V$ and $f\in C$.
For a dg operad morphism $(\oA,\dd_\oA)\To{\xi}(\oB,\dd_\oB)$, a dg \colop{V} morphism $$(\oA,\dd_\oA)_{\ucat} \To{\xi_{\ucat}} (\oB,\dd_\oB)_{\ucat}$$ is defined by
\begin{align}
&\xi_\ucat(a_v) := (\xi(a))_v,\label{UcatFtorOnMorph}\\
&\xi_\ucat(f) := f. \nonumber
\end{align}
It is easy to verify that the defining ideal of $\oA_\ucat$ is sent to the defining ideal of $\oB_\ucat$ and also that $\xi_\ucat\dd=\dd\xi_\ucat$, thus $\xi_\ucat$ is well defined.
It is also easily seen that $$\xi_\ucat \zeta_\ucat = (\xi\zeta)_\ucat$$ for any two dg operad morphisms $\xi,\zeta$, hence $$-_\ucat:\textsf{dg operads}\to\mathsf{dg}\ V\mathsf{\!-operads}$$ is a functor.
\end{definition}

Set $$\oD:=(\oA,\dd_\oA)_\ucat.$$
It is immediately seen that the functor $D$ above is essentially the same thing as $\oD$-algebra, i.e. dg \colop{V} morphism $\oD\to\oEnd_W$, where $W=\bigoplus_{v\in V}D(v)$ and each $D(v)$ is a dg $\fld$-module of colour $v$.

The following lemma generalizes Proposition $5$ of \cite{HDA}.

\begin{lemma} \label{PhiQuism}
$-_{\ucat}$ preserves quisms.
\end{lemma}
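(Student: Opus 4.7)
The plan is to reduce Lemma~\ref{PhiQuism} to the K\"unneth formula (Lemma~\ref{YetAnotherKunneth}) by establishing a natural isomorphism of dg \smod[V]s
\[
\oA_\ucat \;\cong\; \bigl(\bigoplus_{v\in V}\oA_v\bigr)\oo\oC,
\]
where $\bigoplus_v\oA_v$ is regarded as a \smod[V] supported on monochromatic profiles $\binom{v}{v,\ldots,v}$ (with $\oA(n)$ in each such slot) and $\oC$ is the arity-$1$ \smod[V] from \eqref{EQOpeVerCat}. The shape of this composition product is precisely the expected normal form in $\oA_\ucat$: a single $\oA$-operation on top of $n$ category morphisms at the leaves. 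Since $\oC$ has arity $1$, all block sizes in \eqref{EQCompPoductExplicitly} equal $1$, so the arity-$n$ part of the right hand side simplifies to $\oA(n)\ot\oC\binom{v}{v_1}\ot\cdots\ot\oC\binom{v}{v_n}$ modulo the $\Sigma_n$-action implementing the equivariance of $\oA(n)$.

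To construct the isomorphism, I would treat the defining relation $fa_{\inp f}=a_{\out f}f^{\ot\ar a}$ as a rewrite rule: every $\oC$-generator can be pushed past any $\oA$-generator and hence down to the leaves. Iterating on a tree of generators in $(\bigfpr_v\oA_v)\fpr\oC$ produces a representative of the form $a_v\oo(f_1\ot\cdots\ot f_n)$ with $a\in\oA(n)$, $\out{f_i}=v$, $\inp{f_i}=v_i$, giving a surjection from the composition product onto $\oA_\ucat$. For injectivity one checks that the identifications already built into \eqref{EQCompPoductExplicitly}, namely the $\Sigma_m$-coinvariants (the $\Sigma_{l_1}\times\cdots\times\Sigma_{l_m}$-coinvariants being trivial since each $l_i=1$), encode exactly the $\Sigma_n$-equivariance of operadic composition: no further relation on normal forms can be produced, because the rewrite rule strictly removes $\oC$-generators from the interior of trees without creating new ones. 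Under this identification the differential on $\oA_\ucat$, which satisfies $\dd f=0$ and $\dd a_v=(\dd_\oA a)_v$, corresponds to $\dd_\oA\oo\id_\oC$.

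Given the isomorphism, Lemma~\ref{YetAnotherKunneth} yields
\[
H_*(\oA_\ucat,\dd) \;\cong\; H_*\bigl(\bigoplus_v\oA_v,\dd_\oA\bigr)\oo H_*(\oC,0) \;\cong\; H_*(\oA)\oo\oC,
\]
and the construction is natural in $\oA$ by \eqref{UcatFtorOnMorph}, so for any dg operad morphism $\xi\colon\oA\to\oB$ the induced map $H_*(\xi_\ucat)$ corresponds to $H_*(\xi)\oo\id_\oC$. If $\xi$ is a quism then $H_*(\xi)$ is an isomorphism of ordinary $\Sigma$-modules, hence so is $H_*(\xi)\oo\id_\oC$ of \smod[V]s, and therefore $\xi_\ucat$ is a quism.

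The one genuinely nontrivial ingredient is the normal-form isomorphism in the first paragraph: one must verify that the rewrite $fa_{\inp f}\rightsquigarrow a_{\out f}f^{\ot\ar a}$ is confluent and terminating on $(\bigfpr_v\oA_v)\fpr\oC$, and that after taking the operadic coinvariants the resulting presentation matches \eqref{EQCompPoductExplicitly} exactly. Everything else is a mechanical application of the K\"unneth formula together with the functoriality built into \eqref{UcatFtorOnMorph}.
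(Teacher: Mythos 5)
Your proposal is correct and follows essentially the same route as the paper: both establish the normal-form isomorphism $\oA_\ucat\binom{v}{v_1,\ldots,v_n}\cong(\oA_v\oo\oC)\binom{v}{v_1,\ldots,v_n}$ by pushing the $\oC$-generators to the leaves via the defining relation, and then conclude by naturality and Lemma~\ref{YetAnotherKunneth}. The paper justifies uniqueness of the canonical form simply by freeness rather than by an explicit confluence argument, but the content is the same.
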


\begin{proof}
Let $\oA=(\oA,\dd)$ be a dg \colop{V} and let $v,v_1,\ldots,v_n\in V$.
We claim that there is an isomorphism 
\begin{align*}
\oA_\ucat\binom{v}{v_1,\ldots,v_n} &\cong\,\oA_v(n)\ot_{\Sigma_n}\left( \bigoplus_{\vec{W}} \oC\binom{v}{w_{1,1}}\ot\cdots\ot \oC\binom{v}{w_{n,1}}\ot\kspan{\Sigma(\vec{W},\vec{v})} \right) = \\
&= (\oA_v\oo\oC)\binom{v}{v_1,\ldots,v_n}
\end{align*}
of dg $\kspan{\Sigma_{\vec{v}}}$-modules, where $\vec{W}=(w_{1,1},\ldots,w_{n,1})\in V^n$.

The isomorphism assigns a \emph{canonical form} to an element $x\in \oA_\ucat\binom{v}{v_1,\ldots,v_n}$ :
Assume $x$ is an equivalence class of a composition of the generators from $(\bigfpr_{v\in V} \oA_v)\fpr\oC$.
Now use the defining relations to \uv{move} the generators from $\bigfpr_{v\in V}A_v$ to the left, so that $x=a\ot f_1\ot\cdots\ot f_n\ot \sigma$ for some $a\in \oA$, $f_1,\ldots,f_n\in \oC$ and $\sigma\in\Sigma_n$.
Then $a\ot f_1\ot\cdots\ot f_n\ot\sigma$ is called the canonical form of $x$.
By freeness, it is uniquely determined by $x$.
It is immediate that we get an isomorphism of dg $\kspan{\Sigma_{\vec{v}}}$-modules above and also $\oA_\ucat(n)\cong(\oA_v\oo\oC)(n)$ as dg $\kspan{\Sigma_n}$-modules.

Let $(\oA,\dd_\oA)\To{\xi}(\oB,\dd_\oB)$ be a quism.
It is easy to see that the following diagram commutes
\begin{diagram}
\oA_\ucat\binom{v}{v_1,\ldots,v_n} & \rTo^{\xi_\ucat} & \oB_\ucat\binom{v}{v_1,\ldots,v_n} \\
\dTo<\cong & & \dTo>{\cong} \\
(\oA_v\oo\oC)\binom{v}{v_1,\ldots,v_n} & \rTo^{\xi\ot\id\ot\cdots\ot\id\ot\id} & (\oB_v\oo\oC)\binom{v}{v_1,\ldots,v_n}
\end{diagram}
The diagram descends to homology, the lower horizontal arrow becomes an isomorphism by Lemma \ref{YetAnotherKunneth}, thus the upper horizontal arrow becomes an isomorphism as well.
\end{proof}

\subsection{Main theorem} \label{SectionTheorem}

Suppose we are given a resolution
$$\oCres = (\Fr{F},\dd) \quism[\phi_\oC] (\oC,0)$$
of $\oC$ satisfying the \emph{assumptions} \ref{CatResCond}
and a \emph{minimal} resolution
$$\oAres = (\Fr{X},\dd) \quism[\phi_{\oA}] (\oA,\dd)$$
of $\oA$.
We will use the same symbol $\dd$ for all the involved differentials.
The correct meaning will always be clear from the context.
Denote
\begin{align*}
X_V &:= X\ot\kspan{V} \\
X_F &:= \upar X\ot F.
\end{align*}
These are \smod[V]s by $\Sigma$ action on the $X$ factor.
An element $x\ot v\in X\ot\kspan{V}$ is denoted by $x_v$.
Analogously, $\upar x\ot f \in \upar X\ot F$ is denoted $x_f$.
Hence $$\dg{x_v}=\dg{x},\quad \dg{x_f}=\dg{x}+\dg{f}+1.$$
Obviously $X_V = \bigoplus_{v\in V}X\ot\kspan{v}$ and for any $v\in V$ we denote $$X_v:=X\ot\kspan{v}.$$
Finally, let
$$\oDres := \Fr{X_V\op F\op X_F}.$$
We also extend the notation $x_v$ for $x\in X$ and $v\in\kspan{V}$ to an operad morphism
\begin{align*}
-_v: \Fr{X} &\to \Fr{X_v} \into \oDres \\
x &\mapsto x_v.
\end{align*}

We will be interested in differentials of a special form on $\oDres$.
To state it precisely, we introduce the following maps:
\begin{definition} \label{DefPolarization}
For any $x\in X(n)$, the linear map $$\pol{x}{-}:\oCres\to\oDres(n)$$ is uniquely given by requiring
\begin{align*}
&\pol{x}{f} = x_f,\\
&\pol{x}{r_1r_2} = \pol{x}{r_1}\chir{r_2}_n + (-1)^{\dg{r_1}(\dg{x}+1)}r_1\pol{x}{r_2}
\end{align*}
for every $f\in F$ and $r_1,r_2\in \oCres$.

Thus $\pol{x}{-}$ behaves much like a derivation of degree $\dg{x}+1$.
Checking it is well defined boils down to verify $\pol{x}{r_1(r_2r_3)}=\pol{x}{(r_1r_2)r_3}$, which is easy.
Note that $\pol{x}{\id}=0$ for any unit in the \colop{V} $\oCres$.
\end{definition}

\begin{definition}
Let $\oA$ be a graded operad.
Recall that a presentation 
\begin{gather} \label{Presentation}
\frac{\Fr{E}}{(R)} \cong \oA
\end{gather}
is called \emph{quadratic} iff $R\subset \Fr[2]{E}$, i.e. elements of $R$ are sums of operadic compositions of exactly $2$ generators from $E$.
The elements of the \smod $E$ are called \emph{generating operations}.

Recall $\oA$ is called \emph{Koszul} iff there is a quadratic presentation \eqref{Presentation} such that the cobar construction on the Koszul dual $\Kdual{\oA}$ of $\oA$ is a resolution of $\oA$, i.e. $$\Omega(\Kdual{\oA}) \quism[\phi_\oA] (\oA,0).$$
See \cite{LV} for the notation and more details.
\end{definition}

We are now finally able to state our main result:

\begin{theorem} \label{MAIN}
Let $\oA$ be a \emph{Koszul} operad with generating operations concentrated in a \emph{single arity} $\geq 2$ and a \emph{single degree} $\geq 0$.
Let $\ucat$ be a small category and let $(\oCres,\dd_\oC) \To{\phi_{\oC}} (\oC,0)$ be its resolution (in the sense explained in Section \ref{SectionCategory}) satisfying the assumptions \ref{CatResCond}.
Then the graded \colop{V} $\oD=(\oA,0)_\ucat$ of \eqref{oDdefinition}, describing $\ucat$-shaped diagrams of $\oA$-algebras, has a free resolution $$(\oDres,\dd) \quism[\Phi] (\oD,0)$$ of the form $$\oDres := \Fr{X_V\op F\op X_F}$$
with the differential $\dd$ given by
\begin{align}
&\dd x_v = (\dd x)_v, \nonumber \\
&\dd f = \dd_{\oC}f, \label{DiffOnDInfty}  \\
&\dd x_f = (-1)^{1+\dg{x}}\pol{x}{\dd f} + (-1)^{1+\dg{x}\dg{f}}fx_{\inp{f}} + x_{\out{f}}\chir{f}_n + \omega(x,f), \nonumber
\end{align}
where $x\in X(n)$, $v\in V$, $f\in F$ and $\omega(x,f)$ lies in the arity $n$ part of the 
\begin{gather} \label{EQIdeal}
\textrm{ideal }\ideal{n}\textrm{ generated by }F_{\geq 1}\op X_F(<n)
\end{gather}
in
\begin{gather} \label{EQoDresLess}
\oDres^{<n}:=\Fr{X_V(<n)\op F\op X_F(<n)}.
\end{gather}
The differential $\dd$ on $\oDres$ is minimal iff $\dd$ on $\oCres$ is.
The dg \colop{V} morphism $\Phi$ is given by
\begin{align*}
\Phi(x_v) &= (\phi_{\oA}(x))_v,\\
\Phi(f) &= \phi_{\oC}(f),\\
\Phi(x_f) &= 0.
\end{align*}
\end{theorem}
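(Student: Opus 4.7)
My plan is to follow M. Markl's approach from \cite{HDA}: build $(\oDres, \dd)$ inductively, extending the obvious differential on $\Fr{X_V \op F}$ to the $X_F$-generators, and then verify that $\Phi$ is a quism via a filtration argument based on Lemmas \ref{YetAnotherKunneth} and \ref{PhiQuism}.

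The formulas $\dd x_v = (\dd x)_v$ and $\dd f = \dd_\oC f$ immediately define a differential on $\Fr{X_V \op F} \subset \oDres$ satisfying $\dd^2 = 0$. To extend $\dd$ to $X_F$, I proceed by a primary induction on the arity $n = \ar{x}$ for $x \in X(n)$, with a secondary induction on $\dg{f}$. Since $\oA$ is Koszul with generators in a single arity $a \geq 2$, we have $X(m) = 0$ for $m < a$, so the base of the outer induction is $n = a$, where moreover $\dd x = 0$. For $f \in F_0$ one may then take $\omega(x,f) = 0$, and $\dd^2 x_f = 0$ follows immediately from (C4)--(C6); higher weights of $f$ are handled by the inner induction.

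For the general inductive step, write the principal part of $\dd x_f$ as
\[
P(x,f) := (-1)^{1+\dg{x}}\pol{x}{\dd f} + (-1)^{1+\dg{x}\dg{f}}\, f x_{\inp{f}} + x_{\out{f}} \chir{f}_n.
\]
By the secondary induction, $\pol{x}{\dd f}$ only involves generators $x_g$ with $\dg{g} < \dg{f}$, on which $\dd$ has already been defined. Expanding $\dd P(x,f)$ via Definition \ref{DefPolarization}, properties (C1)--(C6) of $\chir{-}_n$, $\dd_\oC^2 = 0$, the minimality of $\oAres$ (so $\dd x \in \Fr[\geq 2]{X}$), and the inductive formula for $\dd x_g$, standard cancellations should yield that $\dd P(x,f)$ lies in $\ideal{n}$ and is $\dd$-closed there. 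The task then reduces to producing $\omega(x,f) \in \ideal{n}$ with $\dd \omega(x,f) = -\dd P(x,f)$. I expect this to be the main obstacle: plain acyclicity of $\oDres^{<n}$ is insufficient, so a refined acyclicity statement for $\ideal{n}$ is needed. My plan is to realize $\ideal{n}$ as a composition product built on the \uv{tail} $F_{\geq 1} \op X_F(<n)$ and apply the K\"unneth formula of Lemma \ref{YetAnotherKunneth}, exploiting that $\oCres$ is acyclic in positive degrees (and that $X_F = \upar X \ot F$ carries an extra suspension shift) to obtain the required vanishing.

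Given $(\oDres, \dd)$, the quism property of $\Phi$ follows from a standard filtration/spectral-sequence argument. With an appropriate grading making the \uv{coupling} summands of $\dd$ ($f x_{\inp{f}}$, $x_{\out{f}} \chir{f}_n$, and $\omega$) strictly lower-filtration than the \uv{internal} summands ($\dd x_v$ and $\dd f$), the $E^1$-term is $H_*(\Fr{X_V \op F}, \dd)$; Lemma \ref{YetAnotherKunneth} identifies it with $H_*(\oAres) \oo H_*(\oCres) \cong \oA \oo \oC$, and Lemma \ref{PhiQuism} matches this with $\oD$. The filtration is bounded below and exhaustive, so the spectral sequence converges and $\Phi$ is a quism. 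Finally, the minimality assertion is direct: each summand of $P(x,f)$ has weight $\geq 2$ in $\oDres$ precisely when $\pol{x}{\dd f}$ does, i.e., when $\dd_\oC f$ has weight $\geq 2$ in $\oCres$; and $\omega(x,f)$ may always be chosen of weight $\geq 2$ within $\ideal{n}$ provided $\oCres$ is minimal.
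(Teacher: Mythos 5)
Your overall architecture (double induction on $\ar{x}$ and $\dg{f}$ to build $\dd$, then a filtration argument for the quism) matches the paper's, but the two key steps are not carried out correctly. First, the mechanism you propose for producing $\omega(x,f)\in\ideal{n}$ cannot work as stated: $\ideal{n}$ is not a subcomplex of $\oDres^{<n}$ (already $\dd x_g$ for $x_g\in X_F(<n)$ contains the summand $g\,x_{\inp{g}}$ with $g\in F_0$, which is not in the ideal), so there is no \uv{refined acyclicity of $\ideal{n}$} to appeal to; moreover the obstruction $\varphi(x,f)=-\dd P(x,f)$ contains the term $f\,\dd x_{\inp{f}}$ and hence does not lie in $\ideal{n}$ either. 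The paper instead solves $\dd\omega(x,f)=\varphi(x,f)$ in all of $\oDres^{<n}$, using that $\phi:\oDres^{<n}\to(\Fr{X(<n)},\dd)_\ucat$ is a quism and that $\varphi(x,f)$ is a cycle killed by $\phi$ --- which forces an interleaving of the construction of $\dd$ with the quism statement at lower arities (Lemmas \ref{lemmaA} and \ref{lemmaB}) that your linear \uv{first build $\dd$, then prove quism} ordering misses. Membership of $\omega(x,f)$ in $\ideal{n}$ is then deduced \emph{a posteriori} by an arity/degree count (Lemma \ref{KoszulGenerators}): any summand with no generator from $X_F(<n)$ must contain a generator from $F_{\geq 1}$. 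This count is precisely where the Koszul, single-arity, single-degree hypothesis enters; your proposal never uses that hypothesis beyond $X(m)=0$ for small $m$, which cannot suffice, since this is exactly the point at which Markl's general conjecture remains open.

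Second, your concluding spectral-sequence argument is too coarse. If the coupling summands $fx_{\inp{f}}$, $x_{\out{f}}\chir{f}_n$ and $\omega$ are all pushed into strictly lower filtration, then $\dd^0 x_f$ retains at most $\pol{x}{\dd f}$, so the classes of the generators $x_f$ survive to $E^1$ and the $E^1$-page is \emph{not} $H_*(\Fr{X_V\op F},\dd)$; the coupling terms are exactly what must kill those classes and impose the diagram relations. The paper's Lemma \ref{lemmaA} therefore keeps the full principal part $\dd^0 x_f=\pm\pol{x}{\dd f}\pm fx_{\inp{f}}+x_{\out{f}}\chir{f}_n$ on the $E^0$-page (discarding only $\omega$ and the quadratic part of $\dd x_v$, using minimality of $\oAres$) and then computes $H_*(E^0,\dd^0)$ via two further nested spectral sequences, one of which has a filtration bounded above but not below and requires a separate convergence (sandwich) argument via Weibel's Lemma 5.5.7. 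None of this is visible in your sketch, so as written the quism claim is unsupported.
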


\begin{remark}
This is a weaker form of Conjecture $31$ of \cite{HDA}.
First, we are restricted to Koszul operad $\oA$ with generating operations in a single arity and a single degree, while the conjecture lets $\oA$ be any dg operad.
Second, the ideal $\ideal{n}$ is larger, generated by $F_{\geq 1} \op X_F(<n)$, while the conjectured
\begin{gather} \label{EQOrigIdeal}
\textrm{ideal $\ideal{n}_{\textrm{orig}}$ is generated just by }X_F(<n).
\end{gather}
In particular, we recover, at least for $\oA$ as above, Theorem $7$ of \cite{HDA} dealing with the case of $\ucat$ being a single morphism between two distinct objects and $\oCres$ its trivial resolution.
Observe that in this case, $\ideal{n}$ is in fact generated just by $X_F(<n)$ since $F_{\geq 1}=0$ (of course, similar statement holds for any $\oCres$ concentrated in degree $0$, which corresponds to a \emph{free} category $\ucat$).
We also recover Theorems $18$ and $24$ of \cite{HDA}, again with the above mentioned restrictions.

However, there seems to be a completely unclear statement at the very end of the proof of Theorem $7$, page $11$ of \cite{HDA}.
As the proofs of Theorems $18$ and $24$ of \cite{HDA} are only sketched, there is probably the same problem.
To remedy it, we had to introduce our assumptions.
We will discuss these assumptions in detail after proving our main theorem.
However, we don't know any counterexample to the original theorems of \cite{HDA}.
\end{remark}


\section{Proof of main theorem} \label{SectionProof}

\subsection{Lemmas} \label{SectionLemmas}

\begin{lemma} \label{lemmaA}
Let $\ucat$ be a small category, let $(\oCres,\dd_\oC) \To{\phi_{\oC}} (\oC,0)$ be its resolution satisfying the assumptions \ref{CatResCond}.
For \emph{any minimal} dg \colop{V} of the form $(\Fr{X},\dd)$ with $X(0)=X(1)=0$, let $$\oDres := \Fr{X_V\op F\op X_F}$$ and \emph{assume} there is a differential $\dd$ on $\oDres$ satisfying
\begin{align*}
&\dd x_v = (\dd x)_v, \\
&\dd f = \dd_\oC f,\\
&\dd x_f = (-1)^{1+\dg{x}}\pol{x}{\dd f} + (-1)^{1+\dg{x}\dg{f}}fx_{\inp{f}} + x_{\out{f}}\chir{f}_n + \omega(x,f),
\end{align*}
where $x\in X(n)$, $v\in V$, $f\in F$ and $$\omega(x,f) \in \oDres^{<n}=\Fr{X_V(<n)\op F\op X_F(<n)}(n).$$
\emph{Assume} $\phi$ is a dg \colop{V} morphism $$(\oDres,\dd )\xrightarrow{\phi}(\Fr{X},\dd)_{\ucat}$$ satisfying
\begin{align} \label{DefPhi}
&\phi(x_v) = x_v,\\
&\phi(f) = f \nonumber
\end{align}
for $f\in F_0$ and vanishing on all the other generators.
Then $\phi$ is a quism.
\end{lemma}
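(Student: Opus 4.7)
The plan is to prove $\phi$ is a quism in each arity $n$ by induction on $n$. The cases $n = 0, 1$ are immediate: arity $0$ is trivial, and in arity $1$ the hypothesis $X(0) = X(1) = 0$ forces $\oDres(1) = \oCres$ and $(\Fr{X})_\ucat(1) = \oC$, with $\phi|_{\oCres} = \phi_\oC$, a quism by assumption.

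For $n \geq 2$ I would filter $\oDres$ by the sub-$V$-operads $\oDres^{\leq p}$ generated by $X_V(\leq p) \op F \op X_F(\leq p)$; each is closed under $\dd$, since the $\omega$-term is confined to $\oDres^{<n} \subseteq \oDres^{\leq n-1}$. The map $\phi$ respects the analogous filtration of $(\Fr{X})_\ucat$, giving, for each $p \leq n$, a morphism of short exact sequences of chain complexes
$$0 \to \oDres^{\leq p-1}(n) \to \oDres^{\leq p}(n) \to Q_{n,p} \to 0.$$
An outer induction on $p = 1, 2, \ldots, n$ combined with the five-lemma applied to the induced long exact sequences in homology reduces the whole problem to showing $\phi$ is a quism on each $Q_{n,p}$.

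I focus on the top-level case $p = n$, the others being analogous. A simple arity count ($\geq 2$ generators of arity $n$ would compose to arity $\geq 2n - 1 > n$) shows every class in $Q_n := Q_{n,n}$ has a unique representative $r_0 \cdot g \cdot (r_1 \ot \cdots \ot r_n)$ with $g \in X_V(n) \op X_F(n)$, $r_0, r_i \in \oCres$. The $\omega$-term and the $(\dd x)_v$-term both lie in $\oDres^{<n}$ and so vanish on $Q_n$, leaving an explicit differential: $\dd_\oC$ on the $r_0, r_i$ slots together with the principal part $\pm \pol{x}{\dd f} \pm f x_{\inp{f}} \pm x_{\out{f}}\chir{f}_n$ on $x_f$-generators. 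Introducing a secondary filtration of $Q_n$ by the type of $g$ (whether $x_v$ or $x_f$) and running a sub-induction on $\dg{f}$ to absorb the cross-coupling $\pol{x}{\dd f}$, one exhibits the $x_f$-layer as an acyclic mapping cone realising $f x_{\inp{f}} \simeq x_{\out{f}} \chir{f}_n$; consequently the inclusion of the $x_v$-sub-complex into $Q_n$ is a quism. That $x_v$-sub-complex is, in arity $n$, isomorphic to $\oCres \oo X_V(n) \oo \otexp{\oCres}{n}$, whose homology by the K\"unneth formula (Lemma \ref{YetAnotherKunneth}) equals $\oC \oo X_V(n) \oo \otexp{\oC}{n}$. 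Finally $\phi$ identifies this with $Q_n^{\mathrm{target}} \cong X_V(n) \oo \otexp{\oC}{n}$ using the defining relation $r_0 \cdot x_v = x_{\out{r_0}}(r_0^{\ot n})$ of $(\Fr{X})_\ucat$ to absorb the top $r_0 \in \oC$ into the $r_i$ slots.

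I expect the principal obstacle to be the rigorous acyclicity argument for the $x_f$-layer: the cross-term $\pol{x}{\dd f}$ couples an $x_f$-generator to $x_{f'}$-generators for every $f'$ appearing in $\dd f$, so the secondary filtration must interleave cleanly with the $\dg{f}$-induction in order to collapse the resulting tower of couplings coherently. Once this is in place, the five-lemma at each level of the filtration on $p$ completes the argument.
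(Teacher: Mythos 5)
Your reduction to the top graded piece $Q_n$ is set up correctly (the unique-representative claim, the vanishing of $\omega(x,f)$ and of $(\dd x)_v$ in $Q_n$), but the key homological claim about $Q_n$ is false, and it is exactly the point where the real work lies. In $Q_n$ the $x_v$-part $\oCres\oo X_V(n)\oo\otexp{\oCres}{n}$ is the \emph{subcomplex} and the $x_f$-part is the quotient complex, whose induced differential retains only $\dd_\oC$ and the $\pol{x}{\dd f}$ coupling; the terms $f x_{\inp{f}}$ and $x_{\out{f}}\chir{f}_n$ land in the $x_v$-part and hence die in the quotient. That quotient is not acyclic: already for $\ucat$ a single morphism $f$ with its trivial resolution one has $\dd f=0$, so $\pol{x}{\dd f}=0$ and the $x_f$-layer has zero differential. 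Consequently the inclusion of the $x_v$-subcomplex is \emph{not} a quism, and it must not be: if it were, you would get $H_*(Q_n)\cong\oC\oo X_V(n)\oo\otexp{\oC}{n}$, in which $f x_{\inp{f}}$ and $x_{\out{f}}\otexp{f}{n}$ are independent classes, whereas the corresponding graded piece of the target is $X_V(n)\oo\otexp{\oC}{n}$, where they are identified. Your final step, ``absorbing the top $r_0$ using the defining relation,'' silently imposes in the source a relation that holds only in the target; taken at face value your computation would show $\phi$ fails to be injective on homology. The whole content of the lemma is that the boundaries of the $x_f$-generators create precisely the relation $f x_{\inp{f}}\simeq x_{\out{f}}\chir{f}_n$ and nothing else; one must prove that the connecting map from the homology of the $x_f$-layer to that of the $x_v$-layer is injective and compute its cokernel. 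This is the analogue of \eqref{subMainEQ} in the paper's proof, where it requires a further filtration, an exhaustion by the subcomplexes $P_n$ with a colimit argument, and a separate discussion of convergence because the relevant filtration is not bounded below. None of this is supplied by, or follows from, an ``acyclic mapping cone'' assertion.

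A secondary weakness: for $p<n$ the quotients $Q_{n,p}$ contain compositions with several generators from $X_V\op X_F$ of arities $\leq p$, so the unique-representative structure and the two-layer analysis are not ``analogous'' to the case $p=n$; the differential there also mixes the layers through the $\omega$-terms of generators of arity $<p$. The paper avoids this by filtering by the total number of generators from $X_V\op X_F$ (rather than by their arities), which makes the associated graded differential uniform across all pieces; if you want to keep an arity filtration you will need a genuinely different treatment of the intermediate quotients.
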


We use the symbol $x_v$ either for $x_v\in X_V\subset\oDres$ or $x_v\in\Fr{X}_v\subset (\Fr{X},\dd)_{\ucat}$.
Similarly for $f\in F_0$.
The correct meaning will always be clear from the context.

\begin{proof}
Let $\filt_i$ be the sub \smod[V] of $\oDres$ spanned by free compositions containing at least $-i$ generators from $X_V\op X_F$.
$\filt_i$'s form a filtration $$\cdots\subset\filt_{-2}\subset\filt_{-1}\subset\filt_0=\oDres.$$
$\pol{x}{\dd f}\in\filt_{-1}$ is obvious and $\ar{\omega(x,f)}=\ar{x}\geq 2$ implies $\omega(x,f)\in\filt_{-1}$.
Hence $\dd\filt_i\subset\filt_i$.
Since $X$ contains no elements of arity $0$ and $1$, for \emph{a fixed arity} $n$ the arity $n$ part $\filt_i(n)$ of this filtration is bounded below.
Consider the corresponding spectral sequence $(E^*(n),\dd^*(n))$.
For each $n$, $(E^*(n),\dd^*(n))$ converges by the classical convergence theorem.
We collect these spectral sequences into $(E^*,\dd^*)$.
Recall that each $(E^i,\dd^i)$ is a dg \colop{V}.
In the sequel, such arity-wise constructions will be understood without mentioning the arity explicitly.
For the $0^\textrm{th}$ term, we have $$E^0\cong\oDres$$
as graded \colop{V}.
Now we make $\dd^0$ explicit.
Let $x\in X(n)$, $n\geq 2$.
By the minimality, each summand of $\dd x_v$ contains at least $2$ generators from $X_V$, hence $\dd x_v\in\filt_{-2}$ and $\dd^0 x_v=0$.
Next, observe that for $n=2$, $\omega(x,f)=0$ by arity reasons.
Let $n\geq 3$.
Each summand of $\omega(x,f)\neq 0$ contains only generators of arity $<n$, hence at least $2$ of these are of arities $\geq 2$.
But generators of arity $\geq 2$ come from $X_V\op X_F$, i.e. $\omega(x,f)\in\filt_{-2}$.
Hence the differential $\dd^0$ is the derivation determined by formulas
\begin{align*}
&\dd^0 x_v = 0\\
&\dd^0 f = \dd f\\
&\dd^0 x_f = (-1)^{1+\dg{x}}\pol{x}{\dd f}+(-1)^{1+\dg{x}\dg{f}}fx_{\inp{f}}+x_{\out{f}}\chir{f}_n
\end{align*}
for $x\in X(n)$ and $f\in F$.

There is a similar construction on $(\Fr{X},\dd)_{\ucat}$.
Denote $\dd'$ its differential.
Let $\filt'_i$ be the sub \smod[V] of $(\Fr{X},\dd)_{\ucat}$ spanned by free compositions containing at least $-i$ generators from $X_V$.
Then these form a filtration $$\cdots\subset\filt'_{-2}\subset\filt'_{-1}\subset\filt'_0=(\Fr{X},\dd)_{\ucat}.$$
Obviously $\dd'\filt'_i\subset\filt'_i$.
By the same argument as above, this filtration is bounded below and hence the corresponding spectral sequence $(E'^*,\dd'^*)$ converges.
For the $0^\textrm{th}$ page, we have $$(E'^0,\dd'^0)\cong (\Fr{X},0)_{\ucat}$$
as dg \colop{V}, i.e. $$\dd'^0=0.$$

The dg \colop{V} morphism $\phi$ satisfies $\dd\filt_i\subset\filt'_i$, hence it induces a morphism $\phi^*:(E^*,\dd^*)\to(E'^*,\dd'^*)$ of spectral sequences.
By \cite{W}, Theorems 5.2.12 and 5.5.1, to prove that $\phi$ is quism, it suffices to show that $\phi^0$ is a quism.
We will prove
\begin{gather} \label{MainEQ}
H_*(E^0,\dd^0) = \frac{\Fr{X_V\op F_0}}{ \left( \set{-fx_{\inp{f}}+x_{\out{f}}\chir{f}_{\ar{x}}}{x\in X,\ f\in F_0} \cup \dd F_1 \right) },
\end{gather}
compare with \eqref{EQNote} and Definition \ref{EQDefOfDiagramFunctor}.
This implies $H_*(\phi^0)$ is the identity and we are done.

The dg \colop{V} $(E^0,\dd^0)$ carries a filtration $$0=\filt''_{-1}\subset\filt''_0\subset\filt''_1\subset\cdots,$$ where $\filt''_i$ is sub \smod[V] of $E^0$ spanned by compositions with $$(\textrm{degree}+\textrm{number of generators from }X_V)\leq i.$$
Obviously $\dd^0\filt''_i\subset\filt''_i$.
This filtration is bounded below and exhaustive, hence the corresponding spectral sequence $(E^{0*},\dd^{0*})$ converges by \cite{W}, Theorem 5.2.12.
We have $$E^{00}\cong\oDres$$ as graded \colop{V} and
\begin{align*}
&\dd^{00}x_v = 0,\\
&\dd^{00}f = 0,\\
&\dd^{00}x_f = (-1)^{1+\dg{x}\dg{f}}fx_{\inp{f}}+x_{\out{f}}\chir{f}_n
\end{align*}
for $x\in X(n)$ and $f\in F$.
We will show 
\begin{gather} \label{subMainEQ}
H_*(E^{00},\dd^{00})=\frac{\Fr{X_V\op F}}{((-1)^{1+\dg{x}\dg{f}}fx_{\inp{f}}+x_{\out{f}}\chir{f}_{\ar{x}}\ |\ x\in X,\ f\in F)}.
\end{gather}
Assume this is already done and let's prove \eqref{MainEQ}.
We proceed to the $1^{\textrm{st}}$ page $E^{01}$ of $E^{0*}$:
$$E^{01}\cong H_*(E^{00},\dd^{00})$$
and under this isomorphism, $\dd^{01}$ is given by
\begin{align*}
&\dd^{01}x_v = 0,\\
&\dd^{01}f = \dd f.
\end{align*}
By the same argument as in the proof of Lemma \ref{PhiQuism}, $$(E^{01},\dd^{01})\cong (\Fr{X_V},0)\oo (\Fr{F},\dd).$$
By Lemma \ref{YetAnotherKunneth} and \eqref{EQNote}, $$H_*(E^{01},\dd^{01}) \cong \Fr{X_V}\oo\oC \cong \Fr{X_V}\oo\frac{\Fr{F_0}}{(\dd F_1)}$$ and by the argument of Lemma \ref{PhiQuism} again,
\begin{gather} \label{EqAux}
H_*(E^{01},\dd^{01}) \cong \frac{\Fr{X_V\op F_0}}{\left( \set{-fx_{\inp{f}}+x_{\out{f}}\chir{f}_{\ar{x}}}{x\in X,\ f\in F_0} \cup \dd F_1 \right) }.
\end{gather}
This is the $2^{\textrm{nd}}$ page $E^{02}$ and we claim that all the higher differentials vanish: $\dd^{0k}=0$ for $k\geq 2$.
To see this, let's assign \emph{inner degree}, denoted by $\indg{-}$, to generators of $E^{0}$:
$$\indg{x_v}=0,\quad \indg{f}=\dg{f},\quad \indg{x_f}=\dg{f}+1.$$
This extends to $E^0$ by requiring the operadic composition to be of inner degree $0$.
Now notice that $\dd^0$ is of inner degree $-1$ and so are all the differentials $\dd^{0k}$.
But \eqref{EqAux} is concentrated in inner degree $0$, hence the spectral sequence $(E^{0*},\dd^{0*})$ collapses as claimed.
We conclude that $E^{02}=E^{0\infty}\cong H_*(E^0,\dd^0)$, thus proving \eqref{MainEQ}.

It remains to prove \eqref{subMainEQ}.
Let $\filt'''_i$ be the sub \smod[V] of $E^{00}\cong\oDres$ spanned by compositions with at least $-i$ generators from $F\op X_F$.
Then $$\cdots\subset\filt'''_{-2}\subset\filt'''_{-1}\subset\filt'''_0=E^{00}$$ is a filtration with $\dd^{00}\filt'''_i\subset\filt'''_i$.
Denote $(E^{00*},\dd^{00*})$ the corresponding spectral sequence.
The convergence of this spectral sequence will be discussed later.
We have 
\begin{align*}
&E^{000}\cong\oDres,\\
&\dd^{000}x_v=0,\\
&\dd^{000}f=0,\\
&\dd^{000}x_f=(-1)^{1+\dg{x}\dg{f}}fx_{\inp{f}}.
\end{align*}
Now we prove 
\begin{gather} \label{EQKolikataUz}
H_*(E^{000},\dd^{000})=\frac{\Fr{X_V\op F}}{((-1)^{1+\dg{x}\dg{f}}fx_{\inp{f}}\ |\ x\in X,\ f\in F)}.
\end{gather}
First observe that $\Fr{F}\oo(X_V\op X_F)$ is closed under $\dd^{000}$ and 
\begin{gather} \label{EQHomoloAux}
H_*(\Fr{F}\oo(X_V\op X_F),\dd^{000})=X_V.
\end{gather}
In the sequel, we may drop the differentials from the notation \uv{$H_*(-,\dd)$} if no confusion can arise.
Let 
\begin{align*}
&P_0 := \kspan{\id}, \\
&P_{n+1} := \Fr{F} \opw \Fr{F}\oo(X_V\op X_F)\oo P_n
\end{align*}
for $n\geq 0$.
We immediately see that $P_n$'s are closed under $\dd^{000}$ and 
\begin{gather} \label{EQRecursionExpanded}
P_n=\bigoplus_{i=0}^{n-1}(\Fr{F}\oo(X_V\op X_F))^{\oo i}\oo\Fr{F} \opw (\Fr{F}\oo(X_V\op X_F))^{\oo n},
\end{gather}
where we used the (iterated) composition product \eqref{EQCompPoductExplicitly}.
By Lemma \ref{YetAnotherKunneth} and \eqref{EQHomoloAux},
$$H_*(P_n) \cong \bigoplus_{i=0}^{n-1}(X_V)^{\oo i}\oo \Fr{F} \opw (X_V)^{\oo n}.$$
\eqref{EQRecursionExpanded} provides a chain of inclusions
$$P_0 \into P_1 \into \cdots \to \colim_{\xrightarrow[n]{}}P_n \cong E^{000}$$
with direct limit $E^{000}$, as easily seen.
Since direct limits commute with homology,
\begin{align*}
H_*(E^{000},\dd) &\cong \colim_{\xrightarrow[i]{}} H_*(P_n) = \colim_{\xrightarrow[i]{}} \left(\bigoplus_{i=0}^{n-1}(X_V)^{\oo i}\oo \Fr{F} \opw (X_V)^{\oo n} \right) \cong\\
&\cong \Fr{X_V}\oo \Fr{F} \cong \frac{\Fr{X_V\op F}}{((-1)^{1+\dg{x}\dg{f}}fx_{\inp{f}}\ |\ x\in X,\ f\in F)}.
\end{align*}

The $1^\textrm{st}$ page $E^{001}$ is therefore described by \eqref{EQKolikataUz}.
An argument with inner degree analogous to the one above shows that $\dd^{00k}=0$ for $k\geq 1$:
In $(E^{00},\dd^{00})$, set $$\indg{x_v}=\indg{f}=0,\quad \indg{x_f}=1.$$
Hence $E^{001}=E^{00\infty}$ is the stable term.

Although we don't know how to prove the convergence of the spectral sequence $(E^{00*},\dd^{00*})$ directly (the filtration is bounded above but not below, we only have the Hausdorff property $\cap_{i}\filt'''_i=0$), there is a weaker statement which follows from Lemma 5.5.7 of \cite{W}:
The $i^{\textrm{th}}$ graded part $\filt'''_i H_*(E^{00},\dd^{00}) / \filt'''_{i-1} H_*(E^{00},\dd^{00})$ of the filtration on homology\footnote{Recall the usual notation $\filt'''_i H_*(E^{00},\dd^{00}) := \Im(H_*(\filt'''_i,\dd^{00})\to H_*(E^{00},\dd^{00}))$.} is isomorphic to a subspace $e_i$ of $E_i^{00\infty}$.\footnote{The lower index denotes the grading associated to the filtration $\filt'''_i$.}
We have (simplifying the notation)
\begin{gather*}
\frac{\Fr{X_V\op F}}{((-1)^{1+\dg{x}\dg{f}}fx_{\inp{f}}+x_{\out{f}}\chir{f}_{\ar{x}})} \subset H_*(E^{00}) \cong \bigoplus_{i\leq 0}\frac{\filt'''_iH_*(E^{00})}{\filt'''_{i-1}H_*(E^{00})} \cong \bigoplus_{i\leq 0}e_i \subset \bigoplus_{i\leq 0}E_p^\infty \cong\\
\cong \frac{\Fr{X_V\op F}}{((-1)^{1+\dg{x}\dg{f}}fx_{\inp{f}})},
\end{gather*}
where the first inclusion is the obvious part of \eqref{subMainEQ} and the second inclusion has just been discussed.
It is not difficult to map the left-hand side through all the isomorphisms and to see that it is mapped \emph{onto} the right-hand side.
Hence the first inclusion is in fact equality and we are done proving \eqref{subMainEQ} and consequently the whole Lemma \ref{lemmaA}.
\end{proof}

\begin{lemma} \label{lemmaB}
Let $\omega(x,f)$ of Lemma \ref{lemmaA} moreover satisfies $\omega(x,f)\in\ideal{n}$ (recall \eqref{EQIdeal}).
Then $\phi$, uniquely determined by \eqref{DefPhi} as a graded \colop{V} morphism, is automatically a dg \colop{V} morphism (i.e. $\phi$ commutes with the differentials).
\end{lemma}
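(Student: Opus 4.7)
The plan is to verify $\phi\dd=\dd'\phi$ on the three classes of generators $x_v$, $f$, $x_f$ of $\oDres$, where $\dd'$ denotes the differential of $(\Fr{X},\dd)_\ucat$; since $\phi$ is an operad morphism and both differentials are derivations of operadic composition, this check on generators suffices. For $x_v$ both sides equal $(\dd x)_v$ directly from the formula for $\dd$ on $\oDres$ and from the definition of $\dd'$ in \eqref{EQDefOfDiagramFunctor}.

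For $f\in F$, I would observe that the restriction of $\phi$ to the sub-operad $\Fr{F}=\oCres$ coincides with $\iota\circ\phi_\oC$, where $\iota:\oC\hookrightarrow(\Fr{X},\dd)_\ucat$ is the obvious inclusion: on $F_0$ it is the identity into $\oC$, and on $F_{\geq 1}$ it is zero, which is exactly what $\phi_\oC$ produces into the degree-$0$ operad $\oC$. Since $\phi_\oC$ is a chain map and $\oC$ carries the zero differential, $\phi(\dd f)=\iota(\phi_\oC(\dd_\oC f))=0=\dd'\phi(f)$.

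The substantive case is $x_f$, where $\phi(x_f)=0$ forces $\phi(\dd x_f)=0$. Of the four summands in \eqref{DiffOnDInfty}, the term $\pol{x}{\dd f}$ vanishes under $\phi$ because, by Definition~\ref{DefPolarization}, every monomial of $\pol{x}{-}$ carries exactly one $X_F$-factor which $\phi$ annihilates; and $\omega(x,f)$ vanishes by the hypothesis $\omega(x,f)\in\ideal{n}$, since every composition in $\ideal{n}$ contains a factor from $F_{\geq 1}\op X_F$ and $\phi$ sends both to $0$. What remains is $(-1)^{1+\dg{x}\dg{f}}fx_{\inp{f}}+x_{\out{f}}\chir{f}_n$, which I would split by the degree of $f$. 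If $f\in F_0$ the sign becomes $-1$ and $\chir{f}_n=\otexp{f}{n}$ by (C4) of Lemma~\ref{chiral}, so the expression reduces to $-fx_{\inp{f}}+x_{\out{f}}\otexp{f}{n}$, which is precisely the defining relator of \eqref{oDdefinition} with $a=x$ of arity $n$ and hence vanishes in $(\Fr{X},\dd)_\ucat$.

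The only nonroutine subcase is $f\in F_{\geq 1}$, and this is where I expect the main subtlety. Here $\phi(f)=0$ handles the first summand outright; for the second, the key observation is that $\chir{f}_n$ has degree $\dg{f}\geq 1$ by (C3), whereas $\phi^{\ot n}(\chir{f}_n)$ lives in $\otexp{\oC}{n}$, a complex concentrated in degree $0$, forcing $\phi^{\ot n}(\chir{f}_n)=0$ and hence $\phi(x_{\out{f}}\chir{f}_n)=0$. This degree-counting step is the reason the enlarged ideal $\ideal{n}$ (permitting $F_{\geq 1}$ generators inside $\omega$) is still compatible with $\phi$ being a chain map.
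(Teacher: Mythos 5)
Your proposal is correct and follows essentially the same route as the paper's proof: reduce to generators, observe that only the $x_f$ case is substantive, kill $\pol{x}{\dd f}$ and $\omega(x,f)$ because $\phi$ annihilates $X_F$ and $F_{\geq 1}$, and split the remaining two terms by $\dg{f}$, invoking (C4) and the defining relation of $(\Fr{X},\dd)_\ucat$ when $\dg{f}=0$. The only cosmetic difference is that you justify $\phi(\chir{f}_n)=0$ for $\dg{f}\geq 1$ by a degree count into the degree-zero operad $\oC$, where the paper notes that each summand of $\chir{f}_n$ contains a generator from $F_{\geq 1}$ — the same underlying fact.
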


\begin{proof}
We have to verify $\phi\dd =\dd\phi$ for generators from $X_V\op F\op X_F$.
The only nontrivial case concerns $X_F$: we have to verify $\phi\dd x_f=0$.
We have 
\begin{align*}
\phi\dd x_f &= (-1)^{1+\dg{x}}\phi(\pol{x}{\dd f})\ +\\ &\phantom{=}+(-1)^{1+\dg{x}\dg{f}}\phi(f)\phi(x_{\inp{f}}) + \phi(x_{\out{f}})\phi(\chir{f}) + \phi(\omega(x,f)).
\end{align*}
If $\dd f=0$, then the first term vanishes trivially.
If $\dd f\neq 0$, then each summand of $\pol{x}{\dd f}$ contains a generator from $X_F$ and hence the first term vanishes too.

By the definition of $\ideal{n}$, we have $\phi(\omega(x,f))=0$.

Hence it remains to prove $$(-1)^{1+\dg{x}\dg{f}}\phi(f)\phi(x_{\inp{f}}) + \phi(x_{\out{f}})\phi(\chir{f}) = 0.$$
If $\dg{f}>0$, we have $\phi f=0$ by definition.
Also $\dg{\chir{f}}>0$ and hence each summand of $\chir{f}$ contains a generator from $F_{\geq 1}$ and consequently $\phi(\chir{f})=0$.
For $\dg{f}=0$, we may assume $f\in M$ and we want to prove $-f x_{\inp{f}} + x_{\out{f}}\otexp{f}{n} = 0$ in $(\Fr{X},\dd)_{\ucat}$.
But this is exactly one of the defining relations of $(\Fr{X},\dd)_{\ucat}$.
\end{proof}

\begin{lemma} \label{KoszulGenerators}
Let an operad $\oA$ be Koszul with generating operations concentrated in a single arity $N\geq 2$ and a single degree $D\geq 0$.
Then for every generator $x$ of the minimal resolution of $\oA$ there is $k\geq 1$ such that $$\ar{x}=a_k:=1+(N-1)k,\quad \dg{x}=d_k:=-1+(D+1)k.$$
Moreover, there is $K$ (possibly $K=+\infty$) such that a generator of arity $a_k$ and degree $d_k$ exists iff $k<K$.
\end{lemma}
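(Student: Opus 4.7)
The plan is to invoke standard Koszul duality theory and then do some bookkeeping with the weight grading. Since $\oA$ is Koszul, its minimal resolution is the cobar construction on the Koszul dual cooperad:
$$\oAres \,=\, \Omega(\Kdual{\oA}) \,=\, \Fr{\downar\ol{\Kdual{\oA}}} \,\quism\, \oA.$$
Thus the free-operad generators of $\oAres$ are in bijection with a desuspension of the coaugmentation coideal $\ol{\Kdual{\oA}}$, and both assertions of the lemma reduce to computing the (arity, degree)-bigrading on $\Kdual{\oA}$.

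First I would use the weight grading $\Kdual{\oA}=\bigoplus_{k\ge 0}\Kdual{\oA}^{(k)}$ inherited from the free operad: the weight $k$ component sits inside $\Fr[k]{E}$, the span of compositions of exactly $k$ generators of $\oA$. A direct tree count yields arity $1+k(N-1)=a_k$ (each vertex of arity $N$ raises the total arity by $N-1$). Viewing $\Kdual{\oA}$ as a sub-cooperad of the bar construction $\mathrm{B}\oA$, each vertex also carries a bar suspension, so weight $k$ lies in homological degree $kD+k = k(D+1)$. The desuspension in the cobar shifts by $-1$, giving the corresponding generator of $\oAres$ in degree $k(D+1)-1 = d_k$. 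This establishes the first assertion.

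For the second, set $K:=\inf\{k\ge 1\mid \Kdual{\oA}^{(k)}=0\}$ (with $K=+\infty$ if this set is empty); it suffices to show $\Kdual{\oA}^{(k)}=0$ forces $\Kdual{\oA}^{(k+1)}=0$. This is the expected behaviour of the conilpotent cooperad $\Kdual{\oA}$, which is cogenerated in weight $1$ by $E$: concretely, the $(k,1)$-component of the reduced cocomposition yields an injection
$$\Kdual{\oA}^{(k+1)} \,\hookrightarrow\, \Kdual{\oA}^{(k)} \oo E,$$
viewed inside the ambient cofree cooperad on $E$, whose reduced cocomposition is injective on the positive-weight part. Hence $\Kdual{\oA}^{(k)}=0$ implies $\Kdual{\oA}^{(k+1)}=0$, and inductively the support of $\Kdual{\oA}$ in weight is an initial segment $\{0,1,\ldots,K-1\}$, yielding the claimed $K$.

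The main obstacle is the injectivity statement in the last paragraph: it is the only step that is not routine bookkeeping. It is a standard consequence of the theory of connected conilpotent cooperads cogenerated in weight one (see e.g.\ Loday--Vallette~\cite{LV}), and I would cite it rather than reprove it.
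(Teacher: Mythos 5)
Your treatment of the first assertion is the same as the paper's: both reduce it to the observation that $\Kdual{\oA}$ is cogenerated by $\upar E$, hence sits inside $\Fr{\upar E}$ as a \smod, so its weight-$k$ part is concentrated in arity $1+(N-1)k$ and degree $(D+1)k$, and the cobar desuspension supplies the $-1$. For the second assertion the two arguments genuinely diverge. The paper stays on the resolution side: it invokes the step-by-step construction of the minimal model (Theorem 3.125 of \cite{MSS}) and a degree estimate --- if there is no generator in arity $a_K$, then any generator needed in arity $a_{K+1}$ would have degree at most $d_{K-1}+2D+1=d_{K+1}-1$, contradicting the first part, and so on inductively. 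You instead argue entirely on the Koszul dual side, showing that the weight support of $\Kdual{\oA}$ is an initial segment because $\Kdual{\oA}^{(k)}=0$ forces $\Kdual{\oA}^{(k+1)}=0$. That is a valid and arguably cleaner route, avoiding the minimal-model machinery altogether; the paper's route, conversely, needs no structure theory of cooperads beyond what was already used for the bigrading. One point to sharpen in your version: the property you actually need is not injectivity of the \emph{full} reduced cocomposition in positive weight (which only separates weight $\geq 2$ from the cogenerators), but injectivity of the single component landing in weight $(k;1,0,\ldots,0)$, i.e.\ of the ``split off one bottom vertex'' map on the cofree conilpotent cooperad. This is true --- a tree with a marked bottom vertex reconstructs a unique source tree, so distinct basis trees have non-cancelling images --- and it restricts to the sub-cooperad $\Kdual{\oA}$ because cocomposition preserves it; but it is this precise statement, rather than injectivity of the reduced cocomposition, that you should isolate or cite from \cite{LV}. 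Finally, note the paper itself only sketches the second claim and remarks it is not used in the sequel, so either proof serves.
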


\begin{proof}
By Koszulity, we have the minimal resolution $$\Omega(\Kdual{\oA}) \quism (\oA,0)$$
given by the cobar construction $\Omega(\Kdual{\oA})=(\Fr{\downar\overline{\Kdual{\oA}}},\dd)$.
Assume $\oA$ has the quadratic presentation \eqref{Presentation}.
Recall that the Koszul dual $\Kdual{\oA}$ is the quadratic cooperad cogenerated by $\upar E$ with corelations $\upar^{2}E$, see \cite{LV}, 7.1.4.
Thus $\Kdual{\oA}$ is a sub \smod of $\Fr{\upar E}$, hence it is concentrated in arities $1+(N-1)k$ and degrees $(D+1)k$.
Hence $\downar\overline{\Kdual{\oA}}$ is concentrated in arities $a_k=1+(N-1)k$ and degrees $d_k=-1+(D+1)k$.

We give only a brief proof of the last claim of this lemma, since we won't need it in the sequel.
Suppose that for every $k<K$ a generator of arity $a_k$ and degree $d_k$ exists.
Further let there be no generator in arity $a_K$.
By the inductive construction of the minimal resolution, as described in the proof of Theorem $3.125$ of \cite{MSS}, the generators in the next possible arity $a_{K+1}$ have degree $\leq d_{K-1} + 2D + 1 = d_{K+1}-1$.
But the existence of any such generator would contradict the previous part of this lemma.
In the next arity, $a_{K+2}$, the generators would have to have degree $\leq d_{K-1} + 3D + 1 < d_{K+2}$.
And so on, hence there are no generators in arity $a_k$ for $k\geq K$.
We encourage the reader to go through the cases $D=0$ and $D=1$.
\end{proof}

\begin{lemma} \label{LemmaDiffExists}
Let an operad $\oA$ be Koszul with generating operations concentrated in a single arity $\geq 2$ and a single degree $\geq 0$.
Then for every $x\in X$ and $f\in F$, there is $\omega(x,f)\in\ideal{\ar{x}}$ as stated in Theorem \ref{MAIN}, i.e. the derivation $\dd$ defined by \eqref{DiffOnDInfty} is indeed a differential on $\oDres$.
\end{lemma}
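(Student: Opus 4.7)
The plan is to construct $\omega(x,f)$ by a double induction: the outer induction runs on $n = \ar{x}$ (which by Lemma \ref{KoszulGenerators} takes discrete values $a_1 < a_2 < \cdots$), and the inner induction on $\dg{f}$. Write
\begin{equation*}
A_x(f) := (-1)^{1+\dg{x}}\pol{x}{\dd f} + (-1)^{1+\dg{x}\dg{f}}fx_{\inp{f}} + x_{\out{f}}\chir{f}_n,
\end{equation*}
so the defining equation $\dd x_f = A_x(f) + \omega(x,f)$ together with the requirement $\dd^2 x_f = 0$ reduces to solving $\dd\omega(x,f) = -\dd A_x(f)$ for $\omega(x,f) \in \ideal{n}(n)$. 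Note that $\pol{x}{\dd f}$ only involves the generators $x_g$ with $\dg{g} < \dg{f}$, and $\omega(y,g)$ for $y \in X(<n)$ has already been processed at an earlier outer stage, so $\dd A_x(f)$ is well-defined under the inductive hypothesis.

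The first step is to verify that $-\dd A_x(f)$ is a $\dd$-cycle. This is a direct computation: one uses $\dd^2 x_v = (\dd^2 x)_v = 0$ (from the differential on $\oAres$), property (C6) in the form $\dd\chir{f}_n = \chir{\dd f}_n$, the derivation-like property of $\pol{x}{-}$ combined with $\dd^2 f = 0$ to deal with $\dd\pol{x}{\dd f}$, and the inductive hypothesis that $\dd^2$ vanishes on the previously constructed generators. With careful sign bookkeeping, all contributions cancel in pairs, so $\dd(-\dd A_x(f)) = 0$.

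The second and more delicate step is to exhibit $-\dd A_x(f)$ as a $\dd$-boundary with an antiderivative in $\ideal{n}(n)$. I would run a spectral-sequence argument analogous to the one in Lemma \ref{lemmaA}: filter $\oDres^{<n}$ by the number of generators from $X_V \oplus X_F$, identify the $E^0$ page as the \uv{leading part} of $\dd$ (the formula for $\dd x_f$ with $\omega$ dropped), and use Lemma \ref{YetAnotherKunneth} together with a computation similar to \eqref{subMainEQ} to read off the relevant $E^1$ and $E^2$ pages. The rigidity supplied by Lemma \ref{KoszulGenerators}, namely $(\ar{x},\dg{x}) = (a_k, d_k)$ for every generator of $X$, is essential here: it forces the bidegree of $-\dd A_x(f)$ into a region of the spectral sequence where no potential obstruction class survives, hence the desired lift $\omega(x,f) \in \ideal{n}(n)$ exists.

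The principal obstacle is precisely this second step. Without the single-arity/single-degree hypothesis on the generators of $\oA$, the bidegrees in $\Fr{X}$ would fill in a two-dimensional region rather than a sparse diagonal, leaving room for obstructing cohomology classes that would prevent the construction of $\omega(x,f)$. Managing the interplay of this filtration, the K\"unneth formula, and the arity-degree rigidity from Koszulity is the technical core of the argument; it is also the place where the restriction in Theorem \ref{MAIN} compared to the original conjecture of \cite{HDA} enters.
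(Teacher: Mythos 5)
Your setup (double induction on $\ar{x}$ and $\dg{f}$, reduction to solving $\dd\omega(x,f)=-\dd A_x(f)$, and the cycle check) matches the paper, but your second step contains a genuine gap. You claim that the obstruction cycle dies because Lemma \ref{KoszulGenerators} ``forces the bidegree of $-\dd A_x(f)$ into a region of the spectral sequence where no potential obstruction class survives.'' This cannot work as stated: the homology of $\oDres^{<n}$ in arity $n$ is not zero in the relevant degrees --- by the (inductively available) Lemma \ref{lemmaA} it is the homology of $(\Fr{X(<n)},\dd)_\ucat$, which in arity $n=\ar{x}$ is genuinely nontrivial because the truncation $\Fr{X(<n)}$ is no longer acyclic there. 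So being a cycle of the right bidegree does not make $-\dd A_x(f)$ a boundary. The paper's actual mechanism is different: one restricts $\phi$ to $\oDres^{<n}\to(\Fr{X(<n)},\dd)_\ucat$, uses Lemmas \ref{lemmaB} and \ref{lemmaA} (under the inductive hypothesis that $\dd^2=0$ and $\omega\in\ideal{\ar{x}}$ on the smaller operad) to conclude $\phi$ is a quism, and then verifies the \emph{second} obstruction condition $\phi\bigl(\varphi(x,f)\bigr)=0$ explicitly --- for $\dg{f}=0$ this amounts to the defining relation $f\dd x_{\inp{f}}-(\dd x_{\out{f}})\otexp{f}{n}=0$ in the diagram operad, and for $\dg{f}>0$ to the vanishing of $\phi$ on positive-degree generators. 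A cycle annihilated by a quism is a boundary; that is what produces $\omega(x,f)$. This verification is entirely absent from your proposal, and it is the step that cannot be replaced by a bidegree argument.

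You also misplace the role of Lemma \ref{KoszulGenerators}. It is not used to kill the obstruction class; it enters only afterwards, in upgrading the solution from $\omega(x,f)\in\oDres^{<n}$ to $\omega(x,f)\in\ideal{n}$. Concretely, one takes a summand $S$ of $\omega(x,f)$ containing no generator from $X_F$ and uses the rigid relation $(\ar{x_i},\dg{x_i})=(a_{k_i},d_{k_i})$ to run a degree count showing $\sum_j\dg{f_j}=\dg{f}+a-1\geq 1$ (since $a\geq 2$ by arity reasons), so some $f_j$ lies in $F_{\geq 1}$ and $S\in\ideal{n}$ anyway. Without this final count your construction would only give $\omega(x,f)\in\oDres^{<n}$, which is not enough for Lemma \ref{lemmaB} and hence not enough to close the induction (the quism property of $\phi$ at the next stage depends on $\phi$ commuting with $\dd$, which is exactly what $\omega(x,f)\in\ideal{n}$ guarantees).
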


To prove this lemma, it is convenient to extend $\omega(x,f)$'s to a linear map as follows.
Fix $x\in X(n)$.
The linear map
$$\omega(x,-):\oCres\to\oDres(n)$$
is uniquely determined by
\begin{align*}
&\textrm{(arbitrary) values }\omega(x,f)\textrm{ on }f\in F\textrm{ and}\\
&\omega(x,r_1r_2) = \omega(x,r_1)\chir{r_2}_n + (-1)^{\dg{r_1}\dg{x}} r_1\omega(x,r_2)
\end{align*}
for any $r_1,r_2\in\oCres$.

Thus $\omega(x,-)$ behaves much like a derivation of degree $\dg{x}$.
Checking it is well defined is similar to \ref{DefPolarization}.

\begin{lemma} \label{Formula}
For any $r\in\oCres$, the formula \eqref{DiffOnDInfty} with $r$ in place of $f$ still holds:
$$\dd\pol{x}{r} = (-1)^{1+\dg{x}}\pol{x}{\dd r} + (-1)^{1+\dg{r}\dg{x}}rx_{\inp{r}} + x_{\out{r}}\chir{r}_n + \omega(x,r).$$
\end{lemma}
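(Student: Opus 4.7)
The plan is to prove the formula by induction on the length of $r$ as a monomial in the free operad $\oCres = \Fr{F}$. Since both sides are linear in $r$, I can restrict to monomials $r = f_1 f_2 \cdots f_k$ with $f_i \in F$, and do induction on $k$. Observe that everything in sight is ``derivation-like'' in $r$: the map $\pol{x}{-}$ is defined by a Leibniz rule (Definition \ref{DefPolarization}), $\omega(x,-)$ is extended by a parallel Leibniz rule, and $\chir{-}_n$ is an operad morphism in the sense of (C5). So the key is that the derivation conventions have been set up to make the equation propagate multiplicatively.

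The base case $k = 1$, i.e. $r = f \in F$, is immediate: $\pol{x}{f} = x_f$ by Definition \ref{DefPolarization}, so $\dd\pol{x}{f} = \dd x_f$ is precisely the RHS of the asserted formula, by the defining equation \eqref{DiffOnDInfty} of the differential in Theorem \ref{MAIN}.

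For the inductive step, write $r = r_1 r_2$ and assume the formula for $r_1, r_2$. First expand
$$\pol{x}{r_1 r_2} = \pol{x}{r_1}\chir{r_2}_n + (-1)^{\dg{r_1}(\dg{x}+1)} r_1 \pol{x}{r_2}$$
via the defining rule, then apply $\dd$ by the Leibniz rule in $\oDres$, using $\dg{\pol{x}{r_1}} = \dg{x}+1+\dg{r_1}$. Use property (C6) of Lemma \ref{chiral} to rewrite $\dd\chir{r_2}_n = \chir{\dd r_2}_n$, and then substitute the induction hypothesis for $\dd\pol{x}{r_1}$ and $\dd\pol{x}{r_2}$. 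The resulting expression is a sum of terms that must be rearranged into the four pieces of the target: (a) $\pol{x}{\dd r_1 \cdot r_2 + (-1)^{\dg{r_1}} r_1 \cdot \dd r_2} = \pol{x}{\dd(r_1 r_2)}$, reassembled via the Leibniz rule for $\pol{x}{-}$; (b) $(r_1 r_2) x_{\inp{r_1 r_2}}$, obtained from the terms $r_1 r_2 x_{\inp{r_2}}$ produced by the induction hypothesis on $r_2$; (c) $x_{\out{r_1 r_2}} \chir{r_1 r_2}_n$, obtained by combining an $x_{\out{r_1}} \chir{r_1}_n \chir{r_2}_n$ term with property (C5) which gives $\chir{r_1}_n \chir{r_2}_n = \chir{r_1 r_2}_n$; and (d) $\omega(x, r_1 r_2)$, reassembled from $\omega(x, r_1)\chir{r_2}_n$ and $r_1 \omega(x, r_2)$ via the Leibniz rule defining $\omega(x,-)$.

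The only real obstacle is sign bookkeeping. The Leibniz rule for $\dd$ on the product $\pol{x}{r_1} \chir{r_2}_n$ produces the sign $(-1)^{\dg{x}+1+\dg{r_1}}$ on the second summand, which must balance against the sign $(-1)^{\dg{r_1}(\dg{x}+1)}$ built into the Leibniz rule for $\pol{x}{-}$ and the sign $(-1)^{\dg{r_1}\dg{x}}$ built into $\omega(x,-)$. Checking that all four targeted terms appear with the correct signs predicted by the RHS (for $r = r_1 r_2$) is a routine but careful bookkeeping task. The key sanity check is that $\pol{x}{-}$ behaves as a derivation of degree $\dg{x} + 1$ while $\omega(x,-)$ behaves as a derivation of degree $\dg{x}$; this differential in sign convention exactly accounts for the extra $(-1)^{\dg{r_1}}$ factors introduced when suspending between the two derivation types in the substitution.
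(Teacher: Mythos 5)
Your proposal is correct and follows essentially the same route as the paper: induction on the length of the monomial $r=f_1\cdots f_k$, with the base case being the defining formula \eqref{DiffOnDInfty}, and the inductive step carried out by expanding $\pol{x}{r_1r_2}$ via its Leibniz rule, applying $\dd$, invoking (C5), (C6) and the Leibniz rules for $\pol{x}{-}$ and $\omega(x,-)$, and matching signs. The paper likewise writes out both sides and concludes with ``a careful sign inspection,'' so your level of detail on the sign bookkeeping matches the original.
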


The proof explains the $\pm$ signs in the definition \eqref{DiffOnDInfty} of $\dd$ on $\oDres$.

\begin{proof}
It suffices to prove the lemma for $r$ of the form $r=f_1f_2\cdots f_k$, where $f_i\in F$.
We proceed by induction on $k$.
The case $k=1$ is exactly formula \eqref{DiffOnDInfty}.
Let $k\geq 2$ and suppose the lemma holds for every sum of compositions of at most $k-1$ elements and let $r=r_1r_2$, where $r_1,r_2$ are compositions of at most $k-1$ generators from $F$.
Now we want to prove 
\begin{gather*}
\dd\pol{x}{r_1r_2} = \\
= (-1)^{1+\dg{x}}\pol{x}{\dd(r_1r_2)} + (-1)^{1+(\dg{r_1}+\dg{r_2})\dg{x}}r_1r_2x_{\inp{r_2}} + x_{\out{r_1}}\chir{r_1r_2}_n + \omega(x,r_1r_2).
\end{gather*}
It is a straightforward computation, we will compare Left-Hand Side and Right-Hand Side:
\begin{align*}
\textrm{LHS} &= \dd\left( \pol{x}{r_1}\chir{r_2}_n + (-1)^{\dg{r_1}(\dg{x}+1)} r_1\pol{x}{r_2} \right) =\\
&= \left( (-1)^{1+\dg{x}}\pol{x}{\dd r_1} + (-1)^{1+\dg{r_1}\dg{x}}r_1x_{\inp{r_1}} + x_{\out{r_1}}\chir{r_1}_n + \omega(x,r_1) \right) \chir{r_2}_n\ + \\
&\phantom{=} + (-1)^{\dg{x}+\dg{r_1}+1}\pol{x}{r_1}\chir{\dd r_2}_n + (-1)^{\dg{r_1}(\dg{x}+1)} (\dd r_1) \pol{x}{r_2}\ +\\
&\phantom{=} + (-1)^{\dg{r_1}(\dg{x}+1)+\dg{r_1}} r_1\left( (-1)^{1+\dg{x}}\pol{x}{\dd r_2} + (-1)^{1+\dg{r_2}\dg{x}}r_2x_{\inp{r_2}}\ +\right. \\ 
&\phantom{=} + x_{\out{r_2}}\chir{r_2}_n + \omega(x,r_2) \Big) \displaybreak[0] \\
\textrm{RHS} &= (-1)^{1+\dg{x}}\pol{x}{(\dd r_1)r_2)} + (-1)^{1+\dg{x}+\dg{r_1}}\pol{x}{r_1\dd r_2}\ + \\
&\phantom{=} + (-1)^{1+(\dg{r_1}+\dg{r_2})\dg{x}}r_1r_2x_{\inp{r_2}} + x_{\out{r_1}}\chir{r_1}_n\chir{r_2}_n\ +\\
&\phantom{=} + \omega(x,r_1)\chir{r_2}_n + (-1)^{\dg{x}\dg{r_1}}r_1\omega(x,r_2) =\\
&= (-1)^{1+\dg{x}}\pol{x}{\dd r_1}\chir{r_2}_n + (-1)^{1+\dg{x}+(\dg{x}+1)(\dg{r_1}-1)}(\dd r_1)\pol{x}{r_2}\ +\\
&\phantom{=} + (-1)^{1+\dg{x}+\dg{r_1}}\pol{x}{r_1}\chir{\dd r_2}_n + (-1)^{1+\dg{x}+\dg{r_1}+(\dg{x}+1)\dg{r_1}}r_1\pol{x}{\dd r_2}\ + \\
&\phantom{=} + (-1)^{1+(\dg{r_1}+\dg{r_2})\dg{x}}r_1r_2x_{\inp{r_2}} + x_{\out{r_1}}\chir{r_1}_n\chir{r_2}_n\ + \\
&\phantom{=} + \omega(x,r_1)\chir{r_2}_n + (-1)^{\dg{x}\dg{r_1}}r_1\omega(x,r_2)
\end{align*}
The proof is finished by a careful sign inspection.
\end{proof}

\begin{proof}[Proof of Lemma \ref{LemmaDiffExists}]
Let $x\in X(n)$ and $f\in F_d$.
First, we make a preliminary computation using the formula of Lemma \ref{Formula}:
\begin{align*}
\dd^2 x_f &= (-1)^{1+\dg{x}}\dd\pol{x}{\dd f} + (-1)^{1+\dg{x}\dg{f}}(\dd f)x_{\inp{f}} + (-1)^{1+\dg{x}\dg{f}+\dg{f}}f\dd x_{\inp{f}}\ + \\
&\phantom{=} + (\dd x_{\out{f}})\chir{f}_n + (-1)^{\dg{x}}x_{\out{f}}\chir{\dd f}_n + \dd\omega(x,f)=\cdots\\
&= (-1)^{1+\dg{x}}\omega(x,\dd f) + (-1)^{1+\dg{x}\dg{f}+\dg{f}}f\dd x_{\inp{f}}  + (\dd x_{\out{f}})\chir{f}_n + \dd\omega(x,f)
\end{align*}
The condition $\dd^2 x_f=0$ is equivalent to
$$\dd\omega(x,f) = (-1)^{\dg{x}}\omega(x,\dd f) + (-1)^{\dg{f}(\dg{x}+1)}f\dd x_{\inp{f}} - (\dd x_{\out{f}})\chir{f}_n =: \varphi(x,f).$$
To construct $\omega(x,f)$ so that $\dd^2 x_f=0$, we will inductively solve the equation 
\begin{gather} \label{Equation}
\dd\omega(x,f)=\varphi(x,f)
\end{gather}
for unknown $\omega(x,f)$.
We proceed by induction on arity $n$ of $x$ and simultaneously by induction on degree $d$ of $f$.

For $n=N$ (the arity of the generating operations of $\oA$) and  $d=0$, we have $\dd f=0=\dd x$, hence \eqref{Equation} becomes $\dd\omega(x,f)=0$, which has the trivial solution.

Fix $n$ and $d$.
Assume we have already constructed $\omega(x,f)\in\ideal{\ar{x}}$ for every $x\in X(<n)$ and $f$ of any degree and also for $x\in X(n)$ and $f\in F_{<d}$.
Let $x\in X(n)$ and $f\in F_d$.
Observe that $\varphi(x,f)\in\oDres^{<n}$ (recall \eqref{EQoDresLess}) by the induction assumption and minimality.
When we restrict $\phi:\oDres\to(\Fr{X},\dd)_{\ucat}$ to $\oDres^{<n}$, we get the graded \colop{V} morphism
$$\oDres^{<n} \To{\phi} (\Fr{X(<n)},\dd)_{\ucat}$$
denoted by the same symbol.
By the induction assumption, $\dd^2=0$ on $\oDres^{<n}$.
By Lemma~\ref{lemmaB}, $\phi$ is dg \colop{V} morphism.
By Lemma \ref{lemmaA}, $\phi$ is a quism.
In a moment, we will show
\begin{align}
\dd\varphi(x,f) &= 0, \label{Cond1}\\
\phi\varphi(x,f) &= 0. \label{Cond2}
\end{align}
This will imply the existence of $\omega(x,f)\in\oDres^{<n}$ such that $\dd\omega(x,f)=\varphi(x,f)$.
In fact, $\omega(x,f)\in\ideal{n}$.
To see this, assume a summand $S$ of $\omega(x,f)$ is a composition of generators none of which comes from $X_F$.
Hence $S$ is an operadic composition of $x_1,\cdots,x_a\in X_V(<n)$ and $f_1,\cdots,f_b\in F$.
By a degree count, we now show that at least one of $f_j$'s lies in $F_{\geq 1}$.
By Lemma \ref{KoszulGenerators}, let $x_i$ have arity $1+(N+1)k_i$ and degree $-1+(D+1)k_i$.
We have
$$
\dg{x}+\dg{f} = \dg{\omega(x,f)} = \dg{S} = \sum_{i=1}^a\dg{x_i} + \sum_{j=1}^b\dg{f_j} = \sum_i (-1+(D+1)k_i) + \sum_j \dg{f_j}
$$
hence
\begin{gather} \label{Aux1}
\sum_j \dg{f_j} = \dg{x} + \dg{f} + a - (D+1)\sum_i k_i.
\end{gather}
Now $$\ar{x} = \ar{S} = 1 + \sum_i(\ar{x_i}-1) = 1 + (N-1)\sum_i k_i$$
hence $$\dg{x}= -1 + (D+1)\sum_i k_i.$$
Substituting this into \eqref{Aux1}, we get $$\sum_j \dg{f_j} = \dg{f}+a-1.$$
We have the trivial estimate $\dg{f}\geq 0$.
Since $\ar{f_j}=1$ for any $j$ and $\ar{x_i}<n=\ar{S}$ for any $i$, we have $a\geq 2$.
Hence $$\sum_{j}\dg{f_j}\geq 1$$ and therefore one of $f_j$'s lies in $F_{\geq 1}$.

It remains to verify the conditions \eqref{Cond1}, \eqref{Cond2}.
For \eqref{Cond1}, we have
\begin{align*}
\dd\varphi(x,f) = (-1)^{\dg{x}}\dd\omega(x,\dd f) +(-1)^{\dg{f}(\dg{x}+1)}(\dd f)\dd x_{\inp{f}} + (-1)^{\dg{x}}(\dd x_{\out{f}})\chir{\dd f}_n.
\end{align*}
Lemma \ref{Formula} and the induction hypothesis imply $$\dd\omega(x,\dd f) = (-1)^{\dg{x}(\dg{f}-1)+\dg{f}-1}(\dd f)\dd x_{\inp{f}} - (\dd x_{\out{f}})\chir{\dd f}_n$$ and after substituting this into the previous equation, we get $\dd\varphi(x,f)=0$.

For \eqref{Cond2}, let $d=0$ first. Then $\varphi(x,f) = f\dd x_{\inp{f}} - (\dd x_{\out{f}})\chir{f}_n$, hence we have to verify
$$f\dd x_{\inp{f}} - (\dd x_{\out{f}})\otexp{f}{n}=0\quad\textrm{in }(\Fr{X(<n)},\dd)_{\ucat}.$$
This follows by the same argument as Lemma \ref{PhiQuism}.
Now let $d>0$.
By induction assumption, $\omega(x,\dd f)\in\ideal{n}$ and therefore $\phi\omega(x,\dd f)=0$.
Finally $\phi f=0=\phi\chir{f}_n$ by definition of $\phi$ since $\dg{f}=\dg{\chir{f}_n}=d>0$.
\end{proof}

Now we can finally prove the main theorem:

\begin{proof}[of Theorem \ref{MAIN}]
Decompose $\Phi$ into $$(\oDres,\dd) \To{\phi} (\Fr{X},\dd)_\ucat \To{(\phi_\oA)_\ucat} (\oA,0)_\ucat = (\oD,0).$$
The dg \colop{V} morphism $(\phi_\oA)_\ucat$ (recall Definition \ref{EQDefOfDiagramFunctor}) is a quism by Lemma \ref{PhiQuism}.
$\phi$ is the graded \colop{V} morphism of Lemma \ref{lemmaA}.
By Lemma \ref{LemmaDiffExists}, there are $\omega(x,f)$'s in $\ideal{\ar{x}}$ such that $\dd$ on $\oDres$ is indeed a differential.
By Lemma \ref{lemmaB}, $\phi$ is a dg \colop{V} morphism and finally, by Lemma \ref{lemmaA}, $\phi$ is a quism.
\end{proof}

\subsection{Discussion} \label{SectionDiscussion}

It is a remarkable observation that in many cases, only a \uv{principal} part of the differential determines what the homology is.
This was exploited in \cite{HDA} and also e.g. in \cite{MMPB} to partially resolve the PROP for bialgebras.
Lemma \ref{lemmaA} is an application of this principle.
Here the minimality of $\oAres$ and the mild assumption $\omega(x,f)\in\oDres^{<n}$ (which in fact only formalizes what we mean by the principal part) are crucial for the spectral sequence argument to separate the principal part of $\dd$.
Apart from the minimality, arbitrary $\oAres$ with $X(0)=X(1)=0$ is allowed (unfortunately, this excludes e.g. unital algebras).
Notice, however, that we \emph{assume} that $\phi$ commutes with differentials.

To guarantee this, we need a stronger constraint on $\omega(x,f)$.
An easy sufficient way to ensure this is described in Lemma \ref{lemmaB}.
It leads to the definition \eqref{EQIdeal} of $\ideal{n}$.

Next, we have to construct a differential $\dd$ on $\oDres$ such that the assumptions of Lemma \ref{lemmaB} are satisfied.
This is achieved in Lemma \ref{LemmaDiffExists}.
To begin with, one obtains $\omega(x,f)\in\oDres^{<n}$ by an inductive argument on the arity of the generators from $X$ using Lemma \ref{lemmaA}.
Then we have to improve this result.
This is where the proof of Theorem~$7$ of \cite{HDA} is unclear.
We were not able to get the originally desired result $\omega(x,f)\in\ideal{n}_{\textrm{orig}}$ (recall \eqref{EQOrigIdeal}).
But if one is able to control the interplay between arity and degree of the generators from $X$ in a suitable way, one obtains at least $\omega(x,f)\in\ideal{n}$ by a simple degree count.
A sufficient control is achieved for the Koszul resolution of a Koszul operad with generating operations bound in a single arity and degree.
This is explained in Lemma \ref{KoszulGenerators}.
We note that Lemma \ref{LemmaDiffExists} can be proved under a weaker control over $X$, but the resulting conditions dont't seem to be of any practical interest.
\bigskip

Still, it might be possible to improve the proof of Lemma \ref{LemmaDiffExists} to get $\omega(x,f)\in\ideal{n}_{\textrm{orig}}$ even without the restrictions imposed on $\oA$, thus proving the original Conjecture $31$ of \cite{HDA}.
However, to our best knowledge, explicit examples of resolutions of diagrams $\oD=\oA_\ucat$ are known only for free categories $\ucat$ and for operads satisfying the assumptions of Theorem \ref{MAIN}.
Moreover, in these cases $\ideal{n} = \ideal{n}_{\textrm{orig}}$.
Hence these do not decide whether the conjecture is still plausible.

Notice a slightly stronger statement about what generators are needed to compose $\omega(x,f)$ can be made.
For example, if $\dg{f}=0$, then $\omega(x,f)$ lies in the ideal generated by $X_f(<n)$ in $\Fr{X_{\out{f}}(<n) \op X_{\inp{f}}(<n) \op \kspan{f}}$.
This can be deduced from the proof of Lemma \ref{LemmaDiffExists}.
However this doesn't seem to be important.

Finally notice that Lemma \ref{lemmaA} is already quite a big achievement - it reduces the problem of resolving $\oD$ to finding $\omega(x,f)$'s from $\oDres^{<\ar{x}}$ so that $\dd^2=0$ and the differential commutes with $\phi$.
Alternatively, by Lemma \ref{lemmaB}, the problem is reduced to finding $\omega(x,f)$'s from $\ideal{\ar{x}}$ so that $\dd^2=0$.

\section{\texorpdfstring{Bar-cobar resolution of $\oC$}{Bar-cobar resolution of C}} \label{SectionBC}

Now we make the content of Theorem \ref{MAIN} more explicit in the case $\oCres=\BC{\oC}$.
We apply Lemma \ref{EasyLemma} on the bar-cobar resolution $\oCres=\BC{\oC}$.
Denote
$$\Sigma^n := \set{ (\xleftarrow{f_n}\cdots\xleftarrow{f_1}) \in(\Mor\oC)^{\times n} }{\out{f_i}=\inp{f_{i+1}}\mbox{ for }1\leq i\leq n-1}$$
the set of chains of composable morphisms in $\ucat$ of length $n$, e.g. $\Sigma^1=\Mor\ucat$.
Denote $\Sigma:=\bigcup_{i\geq 1}\Sigma^i$.

Recall that the bar-cobar resolution $\BC{\oC}$ (e.g. \cite{V}, where the noncoloured case is treated - but the coloured case is completely analogous) is a quasi-free \colop{V} generated by \smod[V] $\kspan{\Sigma}$, where the degree of $\sigma\in\Sigma^{n}$ is $n-1$.
The derivation differential is given by
\begin{align*}
\dd (\xleftarrow{f_n}\cdots\xleftarrow{f_1}) &:= \sum_{i=1}^{n-1}(-1)^{i+n+1}(\xleftarrow{f_n}\cdots\xleftarrow{f_{i+1}})\oo(\xleftarrow{f_i}\cdots\xleftarrow{f_1})\ + \\
&\phantom{:=}+\sum_{i=1}^{n-1}(-1)^{n-i}(\xleftarrow{f_n}\cdots\xleftarrow{f_{i+1}f_i}\cdots\xleftarrow{f_1}).
\end{align*}
The projection $\phi_\oC:\BC{\oC}\to\BC^1{\oC}\cong\oC$ onto the sub \smod[V] of weight $1$ elements is a quism.

\begin{theorem} \label{BarCobarChiral}
Let $\chir{-}^\mathrm{NS}_2:\BC{\oC}\to\BC{\oC}\ot\BC{\oC}$ be a linear map satisfying $\chir{a\oo b}^\mathrm{NS}_2 = \chir{a}^\mathrm{NS}_2\oo\chir{b}^\mathrm{NS}_2$ for all $a,b\in\BC{\oC}$ and determined by its values on generators:
\begin{gather*}
\chir{(\xleftarrow{f_n}\cdots\xleftarrow{f_1})}^\mathrm{NS}_2 := \\
(\xleftarrow{f_n}\cdots\xleftarrow{f_1}) \ot (\xleftarrow{f_n\cdots f_1})\ +\\
+ \sum_{\substack{1\leq m\leq n-1\\ 1\leq j_1<\cdots<j_m\leq n-1}} (-1)^{\epsilon} (\xleftarrow{f_n}\cdots\xleftarrow{f_{j_m +1}}) \cdots (\xleftarrow{f_{j_1}}\cdots\xleftarrow{f_1}) \ot (\xleftarrow{f_n\cdots f_{j_m +1}}\cdots\xleftarrow{f_{j_1}\cdots f_1}),
\end{gather*}
where $\epsilon := mn+\frac{1}{2}m(m-1)+\sum_{i=1}^k j_i$.
Then $\chir{-}^\mathrm{NS}_2$ induces, via \eqref{EQDefChir} and \eqref{EQSigmaBalance}, the maps $\chir{-}^\mathrm{NS}_n$ and $\chir{-}_n$ of Lemma \ref{chiral}.
Moreover, $$(\id[i]\ot\chir{-}^{\textrm{NS}}_a\ot\id[b-i-1])\chir{-}^{\textrm{NS}}_b = \chir{-}^{\textrm{NS}}_{a+b-1}.$$
\end{theorem}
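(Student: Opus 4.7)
The plan is to verify conditions (C2)--(C6) of Lemma \ref{chiral} for $\chir{-}^{\mathrm{NS}}_2$ together with coassociativity, and then invoke Lemma \ref{EasyLemma} to conclude the rest. Since $\BC{\oC}$ is quasi-free as a $V$-operad, the requirement $\chir{a\oo b}^{\mathrm{NS}}_2 = \chir{a}^{\mathrm{NS}}_2\oo\chir{b}^{\mathrm{NS}}_2$ extends $\chir{-}^{\mathrm{NS}}_2$ unambiguously from its values on the generators $c = (\xleftarrow{f_n}\cdots\xleftarrow{f_1})$, so (C5) is automatic. Conditions (C2)--(C4) follow by direct inspection: the contracted chain on the right tensor factor has source $\inp{f_1}$ and target $\out{f_n}$ (the same as $c$), and each block on the left factor has matching source/target, so the colours are right; in each summand the left factor has degree $(n - j_m - 1) + (j_m - j_{m-1} - 1) + \cdots + (j_1 - 1) = n - m - 1$ and the right factor has degree $m$, totalling $n - 1 = \dg{c}$; and for $n = 1$ only the $m = 0$ term survives, giving $f \ot f = \otexp{f}{2}$ on $f \in M \subset F_0$.

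The main technical step is (C6), $\dd \chir{c}^{\mathrm{NS}}_2 = \chir{\dd c}^{\mathrm{NS}}_2$. Because $\dd$ is a derivation for $\oo$ and $\chir{-}^{\mathrm{NS}}_2$ is a homomorphism for $\oo$, the Leibniz computation
\[
\dd\bigl(\chir{a}^{\mathrm{NS}}_2 \oo \chir{b}^{\mathrm{NS}}_2\bigr) = (\dd\chir{a}^{\mathrm{NS}}_2) \oo \chir{b}^{\mathrm{NS}}_2 + (-1)^{\dg{a}} \chir{a}^{\mathrm{NS}}_2 \oo (\dd\chir{b}^{\mathrm{NS}}_2)
\]
shows it is enough to verify (C6) on a generator $c$. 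I would expand both sides as sums indexed by a subset $J = \{j_1 < \cdots < j_m\} \subseteq \{1,\ldots,n-1\}$ together with a local move: on the right hand side the move is either a splitting of $c$ at some index (first sum in $\dd c$) or a contraction of adjacent $f_{i+1}f_i$ (second sum), and on the left hand side the move is either a splitting of a block, a contraction inside a block, a splitting of the outer contracted chain, or a contraction in the outer chain. I would then exhibit a term-by-term bijection: a block splitting in the left factor corresponds to enlarging $J$ in a splitting-type summand of $\dd c$; a contraction inside a block corresponds to a same-$J$ contraction summand of $\dd c$; and analogously for the outer chain. The bulk of the work is comparing the sign $\epsilon = mn + \tfrac12 m(m-1) + \sum_i j_i$ against the alternating signs $(-1)^{i+n+1}$ and $(-1)^{n-i}$ in the bar-cobar differential and the Koszul signs from \eqref{EQFacwiseComp}; each comparison reduces to arithmetic modulo $2$.

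For coassociativity $(\chir{-}^{\mathrm{NS}}_2 \ot \id)\chir{-}^{\mathrm{NS}}_2 = (\id \ot \chir{-}^{\mathrm{NS}}_2)\chir{-}^{\mathrm{NS}}_2$ on a generator $c$, both sides expand as triple tensors. On the left, an outer subset $J \subseteq \{1,\ldots,n-1\}$ produces the block decomposition, then $\chir{-}^{\mathrm{NS}}_2$ applied to the left factor (via multiplicativity) introduces a further refinement $J' \supseteq J$; on the right, $J$ produces the contracted chain of length $m+1$ which is then cut by a subset $K \subseteq \{1,\ldots,m\}$. Both data sets parametrize the same chains of nested splittings of $c$, and the compatibility of the sign $\epsilon$ under refinement of the subset $J$ gives the sign agreement.

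With (C2)--(C6) and coassociativity established, Lemma \ref{EasyLemma} immediately produces $\chir{-}^{\mathrm{NS}}_n$ via \eqref{EQDefChir} and $\chir{-}_n$ via \eqref{EQSigmaBalance} satisfying the required properties, and yields the final displayed equation $(\id[i] \ot \chir{-}^{\mathrm{NS}}_a \ot \id[b-i-1])\chir{-}^{\mathrm{NS}}_b = \chir{-}^{\mathrm{NS}}_{a+b-1}$ directly from the coassociativity clause of that lemma. The principal obstacle throughout is the sign bookkeeping, compounded by the Koszul signs from the factorwise composition product \eqref{EQFacwiseComp} and the alternating signs in both the bar-cobar differential and the exponent $\epsilon$; no step is conceptually hard, but each demands careful tracking.
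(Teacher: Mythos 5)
Your proposal is correct in outline, but it takes exactly the direct combinatorial route that the paper explicitly declines: the paper remarks that verifying $\dd\chir{-}^{\mathrm{NS}}_2 = \chir{\dd -}^{\mathrm{NS}}_2$ and coassociativity \uv{can be done directly, but it is annoying and doesn't explain the origin of $\chir{-}^{\mathrm{NS}}_2$}, and instead rewrites $\BC{\oC}$ as a quotient of $\bigoplus_{n}\oC^{\oo(n+1)}\ot\otexp{C_*(I)}{n}$, where $C_*(I)$ is the simplicial chain complex of the interval. Under that identification $\chir{-}^{\mathrm{NS}}_2$ is the coproduct induced by the standard coassociative dg coproduct $\cm(\mathbf{01})=(\mathbf{0})\ot(\mathbf{01})+(\mathbf{01})\ot(\mathbf{1})$ on $C_*(I)$ together with the diagonal on $\oC$, so compatibility with $\dd$ and coassociativity come for free from the standard fact that a tensor product of coassociative dg coalgebras is again one; the only computation left is to match the induced formula, Koszul sign $\epsilon$ included, against the stated one. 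What your approach buys is self-containedness — no auxiliary model of $\BC{\oC}$ is needed, and your identification of the combinatorial data (block splittings and contractions versus enlarging the subset $J$, and the nested refinements $J\subseteq J'$ versus pairs $(J,K)$ for coassociativity) is the correct bijection. The cost is that the mod-$2$ sign arithmetic for both (C6) and coassociativity, which your write-up describes but does not actually execute, is precisely the part the paper's argument is engineered to bypass; it is the only place where your proof could still go wrong, so it must be carried out in full before the argument is complete. The final appeal to Lemma \ref{EasyLemma} for $\chir{-}^{\mathrm{NS}}_n$, $\chir{-}_n$ and the displayed iterated-coproduct identity is the same in both treatments.
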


\begin{proof}
We apply Lemma \ref{EasyLemma}.
The only nontrivial properties to verify are $\dd\chir{-}^\mathrm{NS}_2 = \chir{\dd -}^\mathrm{NS}_2$ and $(\chir{-}^\mathrm{NS}_2\ot\id)\chir{-}^\mathrm{NS}_2 = (\id\ot\chir{-}^\mathrm{NS}_2)\chir{-}^\mathrm{NS}_2$.
This can be done directly, but it is annoying and doesn't explain the origin of $\chir{-}^\mathrm{NS}_2$.
Thus we go another way.
There is the following description of $\BC{\oC}$.
Let $$C_*(I):=\kspan{(\mathbf{0}),(\mathbf{1}),(\mathbf{01})}$$
be the simplicial chain complex of the interval, i.e. $\dg{(\mathbf{0})}=\dg{(\mathbf{1})}=0$, $\dg{(\mathbf{01})}=1$ and $\dd{(\mathbf{0})}=\dd{(\mathbf{1})}=0$, $\dd{(\mathbf{01})}=(\mathbf{1})-(\mathbf{0})$.
Then 
\begin{gather} \label{EQBCEquivalent}
\BC{\oC}=\frac{\bigoplus_{n\geq 0} \oC^{\oo(n+1)}\ot\otexp{C_*(I)}{n}}{M},
\end{gather}
where the subspace $M$ is spanned by 
\begin{gather*}
f_{n}\ot\cdots\ot f_{i+1}\ot f_i\ot\cdots\ot f_1\ot c_{n-1}\ot\cdots\ot c_{i+1}\ot(\mathbf{0})\ot c_{i-1}\ot\cdots\ot c_1\ +\\
- f_{n}\ot\cdots\ot f_{i+1}f_i\ot\cdots\ot f_1\ot c_{n-1}\ot\cdots\ot c_{i+1}\ot c_{i-1}\ot\cdots\ot c_1
\end{gather*}
for any $f_n,\ldots,f_1\in\oC$ of right colours and any $c_{n-1},\ldots,c_1\in C_*(I)$.
Let the grading and the differential $\dd$ on $\BC{\oC}$ be induced by $C_*(I)$ ($\oC$ is concentrated in degree $0$) in the standard way.
The operadic composition is defined by 
\begin{gather*}
(f_n\ot\cdots\ot f_1\ot c_{n-1}\ot\cdots\ot c_1)\oo (g_m\ot\cdots\ot g_1\ot d_{m-1}\ot\cdots\ot d_1) := \\
(f_n\ot\cdots\ot f_1\ot g_m\ot\cdots\ot g_1\ot c_{n-1}\ot\cdots\ot c_1\ot d_{m-1}\ot\cdots\ot d_1).
\end{gather*}
A dg \colop{V} isomorphism with the previous description is easily seen to be 
\begin{gather} \label{EQIsoBarCobar}
f_n\ot\cdots\ot f_1\ot c_{n-1}\ot\cdots\ot c_1 \mapsto (\xleftarrow{f_n}\cdots\xleftarrow{f_{j_m +1}})\cdots (\xleftarrow{f_{j_1}}\cdots\xleftarrow{f_1}),
\end{gather}
where $c_{j_m}=c_{j_{m-1}}=\ldots=c_{j_1}=(\mathbf{1})$ and all other $c_i$'s equal $(\mathbf{01})$ (remember we can get rid of $(\mathbf{0})$ using the defining relations).
The point is that $C_*(I)$ carries the obvious coassociative coproduct
$$\cm(\mathbf{0})=(\mathbf{0})\ot(\mathbf{0}),\quad \cm(\mathbf{1})=(\mathbf{1})\ot(\mathbf{1}),\quad \cm(\mathbf{01})=(\mathbf{0})\ot(\mathbf{01}) + (\mathbf{01})\ot(\mathbf{1})$$
and there is also the trivial coproduct on $\oC$ given by $\cm(c)=c\ot c$.
These induce coproduct on $\BC{\oC}$ by
$$\cm(f_n\ot\cdots\ot f_1\ot c_{n-1}\ot\cdots\ot c_1) := \left((\underbrace{\cm\ot\cdots\ot\cm}_{2n-1\textrm{ times}}) (f_n\ot\cdots\ot f_1\ot c_{n-1}\ot\cdots\ot c_1)\right) \cdot\tau,$$
where $\tau\in\Sigma_{4n-2}$ rearranges the factors in the expected way, which will be obvious from the following computation.
Denote $c^0:=(\mathbf{0})\ot(\mathbf{01})$, $c^1:=(\mathbf{01})\ot(\mathbf{1})$ and for the rest of the proof, let's order the factor of the tensor products from right to left, i.e. $(\mathbf{01})$ is in position $2$ in $c^1$ and $(\mathbf{1})$ is in position $1$.
Then
\begin{gather*}
\cm (f_n\ot\cdots\ot f_1\ot\underbrace{(\mathbf{01})\ot\cdots\ot (\mathbf{01})}_{n-1\textrm{ times}} ) = \\
=  \hspace{-2ex}\sum_{\substack{0\leq m\leq n-1\\ 1\leq j_1<\cdots<j_m\leq n-1}} \hspace{-1ex}\left( f_n\ot f_n\ot\cdots\ot f_1\ot f_1 \ot c^0\ot\cdots\ot\hspace{-3ex}\underbrace{c^1}_{\mathrm{position}\ j_m}\hspace{-2.75ex}\ot\cdots\ot\hspace{-2.75ex}\underbrace{c^1}_{\mathrm{position}\ j_1}\hspace{-2.5ex}\ot\cdots\ot c^0 \right)\cdot\tau,
\end{gather*}
where $c^1$ appears only at positions $j_1,\ldots,j_m$.
Applying $\tau$ and multiplying yields
\begin{align*}
\sum_{\substack{0\leq m\leq n-1\\ 1\leq j_1<\cdots<j_m\leq n-1}} (-1)^\epsilon &\left( f_n\ot\cdots\ot f_1\ot (\mathbf{0})\ot\cdots\ot \underbrace{(\mathbf{01})}_{j_m}\ot\cdots\ot\underbrace{(\mathbf{01})}_{j_1}\ot\cdots\ot (\mathbf{0}) \right) \ot \\
\ot &\left( f_n\ot\cdots\ot f_1\ot (\mathbf{01})\ot\cdots\ot \underbrace{(\mathbf{1})}_{j_m}\ot\cdots\ot\underbrace{(\mathbf{1})}_{j_1}\ot\cdots\ot (\mathbf{01}) \right),
\end{align*}
where $\epsilon = mn + \frac{1}{2}m(m-1) + \sum_{i=1}^m j_i$ comes from the Koszul convention.
This is exactly the claimed formula under the isomorphisms \eqref{EQIsoBarCobar}.

It is easily seen that $\cm(a\oo b)=\cm(a)\oo\cm(b)$.
$\cm$ is the coproduct induced on the quotient \eqref{EQBCEquivalent} by the tensor product of coassociative dg coalgebras $C_*(I)$ and $\oC$.
It is a standard fact that the tensor product is also a coassociative dg coalgebra, hence $\dd\cm=\cm\dd$, $(\cm\ot\id)\cm = (\id\ot\cm)\cm$.
Then $\chir{-}^\textrm{NS}_2:=\cm$ has the properties (C2)--(C6).

Originally, we found the coproduct of this lemma by hand. We are indebted to Benoit Fresse for suggesting its origin in $C_*(I)$.
\end{proof}

A completely explicit cofibrant resolution $\oDres$ of $\oD=\oA_\ucat$ gives rise to a cohomology theory for $\oA_\ucat$-algebras (i.e. $\ucat$-shaped diagrams of $\oA$-algebras) describing their deformations.
This is explained in \cite{IB}.
Unfortunately, the description of $\dd$ on $\oDres$ given in Theorem \ref{MAIN} is not even explicit enough to write down the codifferential $\codd$ on the corresponding deformation complex $\Der^*(\oDres,\oEnd_W)$, not to mention the rest of the $L_\infty$-structure.
For the basic example $\oA=\oAss$, we already proved in \cite{D} that $(\Der^*(\oDres,\oEnd_W),\codd)$ is isomorphic to the Gerstenhaber-Schack complex (see \cite{GS}) $(C^*_{\mathrm{GS}}(D,D), \codd_{\mathrm{GS}})$ (of a diagram $D$) for \emph{some} resolution $\oDres$.
The method, however, doesn't allow to find $\oDres$ explicitly.
We conjecture that this $\oDres$ has the form given by Theorem \ref{MAIN}:

\begin{conjecture}
In Theorem \ref{MAIN}, let $\oA:=\oAss$, let $\oAres:=\oAss_\infty$ be the minimal resolution of $\oAss$ and let $\oCres=\BC{\oC}$.
Then there are $\omega(x,f)$'s such that
$$(\Der^*(\oDres,\oEnd_W),\codd) \cong (C^*_{\mathrm{GS}}(D,D), \codd_{\mathrm{GS}}).$$
\end{conjecture}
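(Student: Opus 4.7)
The plan is to match $\oDres$, with $\oAres = \oAss_\infty$ and $\oCres = \BC{\oC}$, bigrading by bigrading against the Gerstenhaber-Schack bicomplex, and then use the freedom in choosing $\omega(x, f)$ to promote this vector-space identification into a chain isomorphism. Under bar-cobar, the generators of $\oDres$ carry a natural bigrading by \emph{simplicial degree} (the length of the underlying chain in $\Sigma$) and \emph{Hochschild arity}: a generator $x_v \in X_V$ with $x \in X(n)$ has bidegree $(0, n)$; a chain $\sigma = (\xleftarrow{f_p}\cdots\xleftarrow{f_1}) \in F$ has bidegree $(p, 1)$; and a generator $x_\sigma \in X_F$ with $x \in X(n)$ and $\sigma \in \Sigma^p$ has bidegree $(p, n)$. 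A derivation $\theta \in \Der^*(\oDres, \oEnd_W)$ is determined by its values on these generators, and these values take the form $\otexp{D(v)}{n} \to D(v)$, $D(\inp{\sigma}) \to D(\out{\sigma})$ and $\otexp{D(\inp{\sigma})}{n} \to D(\out{\sigma})$ respectively, which are exactly the three families of cochains assembling into $C^*_{\mathrm{GS}}(D, D)$. Summing over all generators in each bidegree produces a canonical graded $\fld$-linear isomorphism
$$\Psi : \Der^*(\oDres, \oEnd_W) \To{\cong} C^*_{\mathrm{GS}}(D, D).$$

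First I would verify that the principal parts of $\codd$ already match the standard differentials of $\codd_{\mathrm{GS}}$ under $\Psi$, independently of the choice of $\omega$. The piece $\dd x_v = (\dd x)_v$ is the minimal $\oAss_\infty$ differential extended vertex by vertex, whose dual on derivations is the vertex-wise Hochschild coboundary $\coddH$ of GS. The piece $\dd \sigma = \dd_\oC \sigma$ is the bar-cobar differential of $\BC{\oC}$, whose dual is precisely the horizontal simplicial part of $\codd_{\mathrm{GS}}$ on $C^{*,1}_{\mathrm{GS}}$. The cross-terms $f x_{\inp{f}} + x_{\out{f}} \chir{f}_n$ in $\dd x_f$ evaluate, thanks to $\chir{f}_n = \otexp{f}{n}$ for length-one chains (Theorem \ref{BarCobarChiral}), to the standard commutator $\theta \mapsto D(f) \circ \theta - \theta \circ \otexp{D(f)}{n}$ that couples the Hochschild and simplicial directions in GS, while $\pol{x}{\dd f}$ supplies the mixed derivative appearing when differentiating along chains of length $\geq 2$.

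Next I would use the freedom in $\omega(x, \sigma)$ to absorb the residual contributions on the bigraded pieces with $\ar x, \ar \sigma \geq 2$. The explicit formula for the iterated chiral coproduct $\chir{\sigma}_n$ in Theorem \ref{BarCobarChiral}, coming from the coassociative coproduct on $C_*(I)$, is precisely the sum over all refinements of $\sigma$ into $n$ sub-chains; this is dual to the $n$-fold cosimplicial comultiplication governing the higher cross-terms of $\codd_{\mathrm{GS}}$ on $C^{p,n}_{\mathrm{GS}}$. One is therefore led to prescribe $\Psi(\omega(x, \sigma))$ to equal exactly the GS cross-term that is not yet produced by the principal part. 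By bidegree bookkeeping, such a candidate automatically lies in $\ideal{\ar x}$, and the requirement $\codd_{\mathrm{GS}}^2 = 0$ pulls back through $\Psi$ to the obstruction equation $\dd \omega(x, \sigma) = \varphi(x, \sigma)$ of Lemma \ref{LemmaDiffExists}, so that the candidate is a bona fide solution.

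The hard part will be synchronizing this cohomological prescription for $\omega(x, \sigma)$ with the inductive construction of Lemma \ref{LemmaDiffExists}, whose output is determined only up to cocycles. Concretely, one must verify that the specific lift of the GS cross-term back to $\oDres$ under $\Psi^{-1}$ is compatible with the inductive hypothesis at every stage, which reduces to a combinatorial identity relating the iterated chiral coproduct of Theorem \ref{BarCobarChiral} and the Alexander-Whitney-type decomposition underlying the cosimplicial structure of $C^*_{\mathrm{GS}}$. I expect the bialgebra origin of $\chir{-}$ in the simplicial chains of the interval, as made explicit in Theorem \ref{BarCobarChiral}, to render this identity transparent once the Koszul signs and the re-indexing by block permutations are handled; this is essentially the same bookkeeping that appears in the classical proof that the normalized bar construction carries a dg bialgebra structure.
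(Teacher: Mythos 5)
The statement you are proving is stated in the paper as an open \emph{conjecture}: the paper offers no proof of it, and explicitly says that the description of $\dd$ on $\oDres$ in Theorem \ref{MAIN} ``is not even explicit enough to write down the codifferential $\codd$,'' and that the method of \cite{D} establishes the isomorphism only for \emph{some} unidentified resolution. So there is no paper proof to compare against, and your text must stand on its own as a proof --- which it does not yet do. The setup is sensible: the bigraded identification $\Psi$ of generators of $\oDres$ with the three families of GS cochains, and the matching of the principal parts ($\dd x_v$ with $\coddH$, $\dd\sigma$ with the simplicial differential, and the $fx_{\inp{f}}+x_{\out{f}}\chir{f}_n$ terms with the GS cross-terms via $\chir{f}_n=\otexp{f}{n}$) is essentially the content already known from \cite{D} and is plausible. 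The whole difficulty of the conjecture is concentrated in the step you defer: producing explicit $\omega(x,\sigma)\in\ideal{\ar{x}}$ that simultaneously satisfy $\dd\omega(x,\sigma)=\varphi(x,\sigma)$ \emph{in the free operad $\oDres$} and induce exactly the residual GS terms. Your proposal leaves this as ``a combinatorial identity'' you ``expect'' to be transparent, which is precisely where the conjecture is open.

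There are two concrete logical problems beyond the incompleteness. First, $\Psi$ is a map on $\Der^*(\oDres,\oEnd_W)$, whereas $\omega(x,\sigma)$ is an element of the operad $\oDres$ itself, so ``prescribe $\Psi(\omega(x,\sigma))$'' is a type mismatch: what the GS cross-term determines is at best the \emph{image} of $\omega(x,\sigma)$ under evaluation against derivations into $\oEnd_W$, and lifting that back to the free operad is non-canonical (many elements of $\Fr{X_V\op F\op X_F}$ induce the same operation on a fixed $W$). Second, and more seriously, the claim that ``$\codd_{\mathrm{GS}}^2=0$ pulls back through $\Psi$ to the obstruction equation $\dd\omega(x,\sigma)=\varphi(x,\sigma)$'' runs the implication backwards: $\dd^2=0$ on $\oDres$ implies $\codd^2=0$ on $\Der^*(\oDres,\oEnd_W)$, but not conversely, since the functor $\Der^*(-,\oEnd_W)$ for a single diagram $W$ is not faithful on the free operad. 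You must verify $\dd^2 x_\sigma=0$ as an identity among formal operadic compositions of generators, for which the GS identity on cochains gives only a necessary consistency check, not a proof. Until the $\omega$'s are written down explicitly and $\dd^2=0$ is checked at the level of $\oDres$, the argument is a programme rather than a proof.
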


Another very interesting problem is to find an operadic interpretation of Cohomology Comparison Theorem:
Recall that CCT, proved in \cite{GS}, is a theorem relating deformations of the diagram of associative algebras to deformations of a single associative algebra.
The point is that the deformations of the single algebra are described by Hochschild complex equipped with a dg Lie algebra structure given by Hochschild differential and Gerstenhaber bracket.
On the other hand, in known examples (see \cite{FMY}), the $L_\infty$-structure on operadic deformation complex of the diagram has nontrivial higher brackets (see \cite{FMY}).
This suggest that this $L_\infty$-algebra can be rectified to the one given by CCT.


\end{document}